\theoremstyle{definition}
\newtheorem{theorem}{Theorem}[section]
\newtheorem{proposition}[theorem]{Proposition}
\newtheorem{remark}[theorem]{Remark}
\newtheorem{example}[theorem]{Example}
\newtheorem{definition}[theorem]{Definition}
\newtheorem{corollary}[theorem]{Corollary}
\newtheorem{lemma}[theorem]{Lemma}
\newtheorem{question}[theorem]{Question}
\newtheorem{algorithm}[theorem]{Algorithm}
\newenvironment{figurehere}
{\def\@captype{figure}}
{}
\begin{document}

\title{Short Tops and Semistable Degenerations}

\author[Davis]{Ryan Davis}
\address{Department of Mathematics\\
Hibbard Humanities Hall 508\\
University of Wisconsin-Eau Claire\\
Eau Claire WI 54702\\
United States of America}
\curraddr{Center for Nanohybrid Functional Materials\\
University of Nebraska-Lincoln\\
239N Scott Engineering Center\\
844 N. 16th Street\\
Lincoln, NE 68588\\
United States of America}
\email{ryan.stance.davis@gmail.com}

\author[Doran]{Charles Doran}
\address{Department of Mathematical and Statistical Science\\
632 CAB\\
University of Alberta\\
Edmonton, Alberta  T6G 2G1\\
Canada}
\email{charles.doran@ualberta.ca}

\author[Gewiss]{Adam Gewiss}
\address{Department of Mathematics\\
Hibbard Humanities Hall 508\\
University of Wisconsin-Eau Claire\\
Eau Claire WI 54702\\
United States of America}
\email{gewissa@uwec.edu}

\author[Novoseltsev]{Andrey Novoseltsev}
\address{Department of Mathematical and Statistical Science\\
632 CAB\\
University of Alberta\\
Edmonton, Alberta  T6G 2G1\\
Canada}
\email{novoselt@ualberta.ca}

\author[Skjorshammer]{Dmitri Skjorshammer}
\address{Harvey Mudd College\\
Department of Mathematics \\
301 Platt Boulevard\\
Claremont, CA 91711\\
United States of America}
\email{dmitriskj@gmail.com}

\author[Syryczuk]{Alexa Syryczuk}
\address{Department of Mathematics\\
Hibbard Humanities Hall 508\\
University of Wisconsin-Eau Claire\\
Eau Claire WI 54702\\
United States of America}
\email{syryczar@uwec.edu}

\author[Whitcher]{Ursula Whitcher}
\address{Department of Mathematics\\
Hibbard Humanities Hall 508\\
University of Wisconsin-Eau Claire\\
Eau Claire WI 54702\\
United States of America}
\email{whitchua@uwec.edu}

\thanks{We thank the anonymous referee for insightful comments, and the Blugold Commitment Differential Tuition grants program at the University of Wisconsin--Eau Claire for ongoing support of this work.  The fifth and seventh authors were supported in part by NSF Grant No. DMS-0821725.}

\subjclass[2010]{14J28, 14J32, 52B20}

\date{}

\begin{abstract}One may construct a large class of Calabi-Yau varieties by taking
anticanonical hypersurfaces in toric varieties obtained from reflexive
polytopes. If the intersection of a reflexive polytope with a hyperplane
through the origin yields a lower-dimensional reflexive polytope, then the
corresponding Calabi-Yau varieties are fibered by lower-dimensional Calabi-Yau
varieties. A top generalizes the idea of splitting a reflexive polytope into
two pieces. In contrast to the classification of reflexive polytopes, there are
infinite families of equivalence classes of tops. Tops may be used to describe
either fibrations or degenerations of Calabi-Yau varieties. We give a simple
combinatorial condition on tops which produces semistable degenerations of K3
surfaces, and, when appropriate smoothness conditions are met, semistable
degenerations of Calabi-Yau threefolds. Our method is constructive: given a
fixed reflexive polytope which will lie on the boundary of the top, we describe
an algorithm for constructing tops which yields semistable degenerations of the
corresponding hypersurfaces. The properties of each degeneration may be
computed directly from the combinatorial structure of the top.
\end{abstract}

\maketitle

\section{Introduction}

\subsection{The combinatorics of tops}

Let $N \cong \mathbb{Z}^k$ be a lattice, with dual lattice $M$.  
\textcolor{black}{We write points in $N$ as $(n_1, \dots, n_k)$, and points in $M$ as $(m_1, \dots, m_k)$.  We write points in the associated real vector spaces $N_\mathbb{R}$ and $M_\mathbb{R}$ as $\vec{x} = (x_1, \dots, x_k)$ and $\vec{y} = (y_1, \dots, y_k)$, respectively.}  

\textcolor{black}{We will need to use the data of a \emph{triangulation} of a lattice polytope.}

\textcolor{black}{\begin{definition}Let $\mathcal{A}$ be a finite set of points in a $d$-dimensional real vector space.  A \emph{triangulation} of $\mathcal{A}$ is a collection $\mathcal{T}$ of $d$-dimensional simplices such that the vertices of the simplices are points in $\mathcal{A}$, the union of the simplices is the convex hull of $\mathcal{A}$, and any pair of simplices in the collection intersects in a common (possibly empty) face.  We refer to the faces of the simplices in $\mathcal{T}$ as \emph{faces of the triangulation} $\mathcal{T}$.  If $\Delta$ is a lattice polytope, then we refer to a triangulation of the set of lattice points in $\Delta$ as a \emph{triangulation of the lattice polytope} $\Delta$.
\end{definition}}

\textcolor{black}{We may use the duality between $N$ and $M$ to construct new polytopes.}

\begin{definition}Let $\Delta$ be a lattice polytope in \textcolor{black}{$N_\mathbb{R}$} which contains $\vec{0}$.  The \emph{polar polytope} $\Delta^\circ$ is the polytope in \textcolor{black}{$M_\mathbb{R}$} given by:

\begin{align*}
\{(y_1, \dots, y_k) : (n_1, \dots, n_k) \cdot (y_1, \dots, y_k) \geq -1 \\
 \mathrm{for}\;\mathrm{all}\;(n_1,\dots,n_k) \in \Delta\}
\end{align*}
\end{definition}

\begin{definition}
We say a lattice polytope $\Diamond$ is \emph{reflexive} if its polar polytope $\Diamond^\circ$ is also a lattice polytope.
\end{definition}

One may construct a large class of Calabi-Yau varieties by taking anticanonical hypersurfaces in toric varieties obtained from reflexive polytopes.  The polar duality relationship between pairs of reflexive polytopes induces the mirror relationship on the corresponding Calabi-Yau varieties.  To understand the Calabi-Yau varieties arising in this fashion, one must classify the corresponding reflexive polytopes.  There is $1$ one-dimensional reflexive polytope, and there are $16$ isomorphism classes of two-dimensional reflexive polytopes.  The physicists Kreuzer and Skarke showed that there are ${4,319}$ classes of three-dimensional reflexive polytopes and ${473,800,776}$ classes of four-dimensional reflexive polytopes.  

In dimensions $5$ and above, the classification of reflexive polytopes is an open problem.  \textcolor{black}{However, an algorithm for constructing and classifying a restricted class of reflexive polytopes called \emph{smooth Fano polytopes} was given by \cite{Obro}.  We say a reflexive polytope is a smooth Fano polytope if the vertices of every facet of the polytope form a $\mathbb{Z}$-basis for $N$.  Note that every facet of a smooth Fano polytope has exactly $k$ vertices.}

If the intersection of a reflexive polytope with a hyperplane through the origin yields a lower-dimensional reflexive polytope, then the corresponding Calabi-Yau varieties are fibered by lower-dimensional Calabi-Yau varieties.  This relationship has been studied extensively in the physics literature, and more recently by Grassi and Perduca in \cite{GP}.  The physicists Candelas and Font generalized the concept of a reflexive polytope sliced by a hyperplane to the idea of a \emph{top} in \cite{CF}.  

\begin{definition}
A $k$-dimensional \emph{top} is a lattice polytope \textcolor{black}{in $N$ which} has one facet which contains the origin and consists of a $k-1$-dimensional reflexive polytope, \textcolor{black}{and where the} other facets of the \textcolor{black}{polytope} are given by equations of the form 
\[(x_1, \dots, x_k) \cdot (n_1, \dots, n_k) = -1.\]
\noindent Here, $(n_1, \dots, n_k)$ is a point in the lattice $N$.  \textcolor{black}{We refer to the facet containing the origin as the \emph{reflexive boundary}.}  
\end{definition}

By applying an appropriate change of coordinates, we may assume that the reflexive boundary corresponds to the points of the top satisfying
\[(n_1,\dots,n_k) \cdot (0,\dots,0,1) = 0.\]
In this case, the reflexive boundary is simply the intersection of $\Diamond$ with the hyperplane $x_k = 0$.  We choose the convention that all tops are contained in the half-space $x_k \geq 0$.

A two-dimensional top is shown in Figure~\ref{F:top}.  

\begin{figure}[h!]
\begin{center}
\scalebox{.9}{\includegraphics{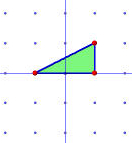}}
\end{center}
\caption{A two-dimensional top}
\label{F:top}
\end{figure}

The polar dual $\Diamond^\circ$ of $\Diamond$ is an unbounded lattice polyhedron; in our choice of coordinates, we may assume that $\Diamond^\circ$ extends infinitely in the $y_k$ direction.  A dual top is illustrated in Figure~\ref{F:dualtop}.  The projection map $(y_1, \dots, y_k) \mapsto (y_1, \dots, y_{k-1})$ maps $\Diamond^\circ$ onto a $k-1$-dimensional reflexive polytope which we call the \emph{dual reflexive boundary}.  As the name implies, the dual reflexive boundary of a dual top is the polar dual of the reflexive boundary of the corresponding top.

\begin{figure}[h!]
\begin{center}
\scalebox{.8}{\includegraphics{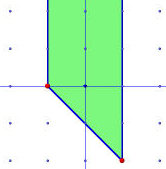}}
\end{center}
\caption{Polar dual of the top in Figure~\ref{F:top}}
\label{F:dualtop}
\end{figure}

\begin{definition}
Let $\Diamond$ be a top.  The \emph{summit} of $\Diamond$ is the intersection of $\Diamond$ with the half-space $x_k \geq 1$.
\end{definition}

\noindent By definition, all of the lattice points of a top lie either in the reflexive boundary or the summit.

Vincent Bouchard and Harald Skarke classified three-dimensional tops in \cite{BS}.  They observed that tops may arise in infinite families, with arbitrarily large numbers of lattice points.  This situation contrasts with the classification of reflexive polytopes: up to changes of coordinates preserving the lattice structure, there are only a finite number of reflexive polytopes in a given dimension.  It follows that there exist tops in every dimension which cannot be completed to reflexive polytopes.  
We illustrate a two-dimensional top that cannot be combined with another top to form a convex reflexive polygon in Figure~\ref{F:nocompletion}.

\begin{figure}[h!]
\begin{center}
\scalebox{.8}{\includegraphics{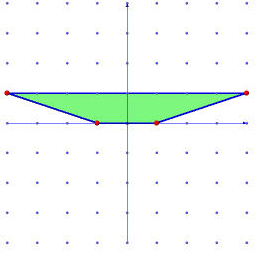}}
\end{center}
\caption{A top that does not complete to a reflexive polytope}
\label{F:nocompletion}
\end{figure}

\subsection{Tops and toric hypersurfaces}

Taking the fan $\Pi$ over the faces of a top defines a toric variety $V_\Pi$; the projection map $(x_1, \dots, x_k) \mapsto x_k$ induces a morphism from $V_\Pi$ to $\mathbb{C}$.  Anticanonical hypersurfaces in $V_\Pi$ are open, $k-1$-dimensional Calabi-Yau varieties; the morphism $V_\Pi \to \mathbb{C}$ induces a map from each of these varieties to $\mathbb{C}$.  Generically, the fiber of this projection map will be a compact $k-2$-dimensional Calabi-Yau variety described by the reflexive boundary of the top.  We may resolve singularities in the generic fiber by choosing a refinement $R$ of $\Pi$ which restricts to a maximal, \textcolor{black}{projective}, simplicial fan on the reflexive boundary; such a fan will include a one-dimensional cone for every nonzero lattice point of the reflexive boundary polytope.  If $k-2 \leq 3$ (so our top is at most five-dimensional), such a fan will yield smooth generic fibers.  We may resolve singularities in our open Calabi-Yau varieties by choosing a maximal \textcolor{black}{projective} simplicial refinement $\Sigma$ of $\Pi$; if $k-1 \leq 3$ (so our top is at most four-dimensional), the open Calabi-Yau varieties will be generically smooth, though the ambient toric variety may have orbifold singularities.  Of course, we can resolve singularities in any dimension if we can find a smooth refinement of $\Pi$.

Alternatively, instead of thinking of smooth anticanonical hypersurfaces as open $k-1$-dimensional Calabi-Yau varieties, we may view them as describing degenerations of $k-2$-dimensional Calabi-Yau varieties.  In this view, working with the refined fan $R$ ensures that the hypersurface describes a degeneration of smooth $k-2$-dimensional Calabi-Yau varieties, while taking $\Sigma$ to be a refinement of $R$ resolves the singularities of the degeneration.  We say a degeneration $X \to U$ is \emph{semistable} if $X$ is non-singular and the fiber $\pi^{-1}(0)$ is reduced, with non-singular components crossing normally.  

We can easily write down the map to $\mathbb{C}$ in homogeneous coordinates.  Suppose $v_1, \dots, v_q$ generate the one-dimensional cones in our fan.  (If we are working with the fan $\Pi$, these generators will just be the vertices of our top; if we are working with $\Sigma$, these will correspond to all of the non-origin lattice points of our top.)  We have corresponding homogeneous coordinates $(z_1, \dots, z_q)$.  Let $h_i$ be the $k$-th coordinate of $v_i$.  Then the map is given by $(z_1, \dots, z_q) \mapsto \prod_{i=1}^q z_i^{h_i}$.  In particular, the preimage of $0 \in \mathbb{C}$ is just given by hyperplanes of the form $z_i=0$, where the corresponding generator lies in the summit of the top.

Bouchard and Skarke studied three-dimensional tops in the context of elliptic fibrations, and described a relationship between points in the summit of a top and twisted Kac-Moody algebras. In many cases, one can read the Dynkin diagram of ADE type which describes the elliptic fibration directly from the summit's points and edges. \cite{BS}
More recently, Candelas, Constantin, and Skarke used four-dimensional tops obtained from slicing reflexive polytopes to describe patterns in the possible Hodge numbers of Calabi-Yau threefolds, \textcolor{black}{and Cicoli, Kreuzer, and Mayrhofer analyzed K3-fibered Calabi-Yau threefolds obtained from slicing four-dimensional reflexive polytopes. \cite{CCS, CKM}}  Grassi and Perduca analyzed a class of reflexive polytopes where both the polytope and its polar dual can be decomposed as a pair of tops.  This construction produces elliptically fibered K3 surfaces which admit semistable degenerations to a pair of rational elliptic surfaces, and can be used to study $F$-theory/Heterotic duality. \cite{GP}

In the current work, we focus on the correspondence between tops and degenerations.  We give a simple combinatorial condition on tops which produces semistable degenerations of K3 surfaces, and (when appropriate smoothness conditions are met) semistable degenerations of Calabi-Yau threefolds.  Our method is constructive: given a fixed reflexive boundary polytope, we apply an algorithm for constructing tops which yields semistable degenerations of the corresponding hypersurfaces.  The properties of each degeneration may be computed directly from the combinatorial structure of the top.  

By a theorem of Mumford, any degeneration may be decomposed as a semistable degeneration followed by a base change. \cite{Mumford}  Thus, the semistable case is the natural starting point for any study of degenerations.  On the other hand, tops provide a concrete, constructive setting for studying more exotic degenerations.  In Proposition~\ref{P:simpTops}, for example, we describe an ``exceptional'' top in every dimension that generalizes the $E_6$ surface singularity.

\section{Short tops}

\begin{definition}
A \emph{short top} is a top where the lattice points in the summit are contained in the hyperplane $x_k = 1$.
\end{definition}

Using polar duality, we see that a top $\Diamond$ is a short top if and only if $\Diamond^\circ$ contains the point $(0,\dots,0,-1)$.  The summit of a short top $\Diamond$ is a facet of the top if and only if the point $(0,\dots,0,-1)$ is a vertex of $\Diamond^\circ$.

\begin{theorem}\label{T:degeneration}
Let $\Diamond$ be a short top, and let $k \leq 5$.  Let $R$ be a maximal simplicial refinement of the fan over the faces of $\Diamond$.  If the nondegenerate anticanonical hypersurface $X_R$ in the toric variety $V_R$ is smooth, then $X_R$ describes a semistable degeneration of smooth $k-2$-dimensional Calabi-Yau varieties.
\end{theorem}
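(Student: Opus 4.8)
The plan is to verify, for the family $\pi \colon X_R \to \mathbb{C}$ cut out on $X_R$ by the toric morphism $V_R \to \mathbb{C}$, the three requirements in the definition of a semistable degeneration: that the total space $X_R$ is nonsingular, that the scheme-theoretic central fiber $\pi^{-1}(0)$ is reduced, and that its components are nonsingular and cross normally. The generic fibers, which are the smooth $(k-2)$-dimensional Calabi--Yau varieties being degenerated, are nonsingular because $k - 2 \le 3$ and $R$ restricts to a maximal simplicial fan on the reflexive boundary; so the content is entirely in the structure of $\pi^{-1}(0)$.

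First I would describe the central fiber explicitly. In the homogeneous coordinates $(z_1, \dots, z_q)$ the morphism is $(z_1, \dots, z_q) \mapsto \prod_{i=1}^{q} z_i^{h_i}$, where $h_i$ is the $k$-th coordinate of the ray generator $v_i$. Because $\Diamond$ is a short top, every nonzero lattice point of $\Diamond$, and in particular every ray generator of $R$, satisfies $h_i = 0$ if it lies on the reflexive boundary and $h_i = 1$ if it lies in the summit. Hence the exponents are all $0$ or $1$, the pullback of $0 \in \mathbb{C}$ to $V_R$ is the reduced divisor $\sum_\rho D_\rho$ summed over the summit rays $\rho$, and $\pi^{-1}(0) \cap X_R = \bigcup_\rho \bigl(D_\rho \cap X_R\bigr)$. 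The nonsingularity of $X_R$ is the hypothesis, so the remaining task is to analyze this union.

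Next I would exploit nondegeneracy of $X_R$: a nondegenerate hypersurface meets every torus orbit $O_\tau$ of $V_R$ transversally. Transversality along the codimension-one orbits $O_\rho$ shows that each summit divisor $D_\rho$ restricts to a reduced divisor on $X_R$, so the central fiber is reduced. More generally, if summit rays $\rho_1, \dots, \rho_j$ span a cone $\tau$ of the simplicial fan $R$, then $D_{\rho_1} \cap \dots \cap D_{\rho_j} \cap X_R$ is either empty or of codimension exactly $j$ in $X_R$, which is the incidence pattern of a normal crossings divisor. In the affine chart of such a cone $\tau$, where the simplicial structure presents a neighborhood of $O_\tau$ as a finite abelian quotient of $\mathbb{C}^k$ with the $D_{\rho_\ell}$ pulled back to coordinate hyperplanes, transversality lets me complete the functions $z_{\rho_1}, \dots, z_{\rho_j}$ to a local coordinate system on $X_R$, exhibiting $\pi^{-1}(0)$ locally as $z_{\rho_1} \cdots z_{\rho_j} = 0$.

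The main obstacle is that $R$ is only a maximal \emph{simplicial} refinement, so $V_R$ carries orbifold singularities and the local model above is a priori a finite quotient of $\mathbb{C}^k$, giving normal crossings only in the orbifold sense. The substance of the proof is to upgrade this to honest smoothness and honest normal crossings at points where $X_R$ meets a singular stratum of $V_R$, and this is precisely where the hypothesis $k \le 5$ enters: the components $D_\rho \cap X_R$ and all of their mutual intersections are hypersurfaces of dimension at most $k - 2 \le 3$, and in this range the standard dimension estimates for nondegenerate hypersurfaces in maximal simplicial toric varieties show that a smooth nondegenerate $X_R$ meets only those strata along which the simplicial quotient restricts trivially to $X_R$. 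I would carry this out by examining the local quotient presentation at each point $p \in X_R \cap O_\tau$ with $\tau$ a singular cone, and checking, using the smoothness of $X_R$ and its transversality to $O_\tau$, that $z_{\rho_1}, \dots, z_{\rho_j}$ extend to honest local coordinates on $X_R$. This local bookkeeping, and in particular handling the one-dimensional singular strata that can appear when $k = 5$, is the step I expect to demand the most care; once the local model is a genuine polydisk, reducedness and the normal-crossings property follow immediately.
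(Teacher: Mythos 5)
Your proposal follows essentially the same route as the paper's proof: write the morphism in homogeneous coordinates, use shortness of the top to see that the pullback of $0$ is the reduced sum of summit divisors $D=\sum_{v_i \in S} D_i$, and then invoke simpliciality of $R$ together with nondegeneracy of $X_R$ to get reducedness and normal crossings of $X_R \cap D$. The one difference is emphasis rather than substance: the orbifold subtlety you flag as the main obstacle (upgrading quotient-normal-crossings to honest normal crossings where $X_R$ meets singular strata of $V_R$, using smoothness of $X_R$ and $k \le 5$) is dispatched by the paper in a single sentence, so your local analysis, if carried out, would in fact supply detail the published proof leaves implicit.
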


\begin{proof}
The fan conditions ensure that the fiber above a general point $z \in \mathbb{C}$ is a smooth $k-2$-dimensional Calabi-Yau variety.  We need to check that the fiber corresponding to $0 \in \mathbb{C}$ is a reduced divisor and has normal crossings.  Let $S$ be the set of generators of one-dimensional cones of $R$ which lie in the summit of $\Diamond$.  Because $\Diamond$ is a short top, the map $V_R \to \mathbb{C}$ can be written in homogeneous coordinates as
\[(z_1, \dots, z_q) \mapsto \prod_{v_i \in S} z_i. \]  
Let $D_i$ be the toric divisor of $V_R$ given by $z_i = 0$.  Then the divisor of $V_R$ corresponding to $0 \in \mathbb{C}$ is simply $D=\sum_{v_i \in S} D_i$.  This divisor is clearly reduced.  It has normal crossings because it is a sum of toric divisors and $R$ is simplicial; since $X_R$ is nondegenerate, its intersection with $D$ will also be reduced and have normal crossings. 
\end{proof}

\begin{corollary}
Any three-dimensional short top describes a family of semistable degenerations of elliptic curves.  Any four-dimensional short top describes a family of semistable degenerations of K3 surfaces.
\end{corollary}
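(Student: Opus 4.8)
The plan is to derive both assertions as immediate consequences of Theorem~\ref{T:degeneration}, so that the only substantive work is to verify that its smoothness hypothesis holds automatically when $k=3$ and $k=4$, and then to name the resulting fibers. A three-dimensional short top has $k=3$ and a four-dimensional short top has $k=4$, so both satisfy the bound $k\le 5$ required by the theorem. It therefore suffices to show that, for a short top $\Diamond$ with $k\le 4$, a nondegenerate anticanonical hypersurface $X_R$ in a maximal simplicial refinement $R$ is automatically smooth, and then to identify the generic fiber of the induced map to $\mathbb{C}$.

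For the smoothness, I would analyze the cones of $R$ directly. Since $\Diamond$ is short, every non-origin lattice point lies on the reflexive boundary $x_k=0$ or on the summit $x_k=1$, and each maximal cone of $R$ is spanned by the vertices of a simplex lying on a facet of $\Diamond$ at lattice distance $1$ from the origin, while the cones contained in the hyperplane $x_k=0$ reproduce the face fan of the reflexive boundary. I would then observe that a cone of dimension at most $3$ is spanned by the vertices of an empty lattice simplex of dimension at most $2$; such a simplex is unimodular, and, writing the supporting hyperplane as $\langle n,\cdot\rangle=\pm 1$ with $n$ primitive, the edge vectors span a saturated sublattice of $\ker\langle n,\cdot\rangle\cap N$, so the generators extend to a $\mathbb{Z}$-basis of $N$ and the cone is smooth. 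Consequently the singular locus of $V_R$ is contained in toric strata of codimension at least $4$. When $k=3$ this forces $V_R$ itself to be smooth; when $k=4$ it confines the singularities of $V_R$ to a finite set of torus-fixed points. In either case a nondegenerate $X_R$, being of dimension $k-1\le 3$, meets the singular locus of $V_R$ in expected dimension $(k-1)+(k-4)-k=k-5<0$, so it avoids that locus and is smooth, exactly as anticipated in the discussion of maximal refinements with $k-1\le 3$.

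With smoothness established, Theorem~\ref{T:degeneration} immediately shows that $X_R$ describes a semistable degeneration of smooth Calabi--Yau varieties of dimension $k-2$. The final step is purely definitional: for $k=3$ these are smooth one-dimensional Calabi--Yau varieties, that is, elliptic curves, and for $k=4$ they are smooth two-dimensional Calabi--Yau varieties, that is, K3 surfaces.

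I expect the main obstacle to be the smoothness argument of the second paragraph, and in particular the claim that the singular locus of $V_R$ has codimension at least $4$. The delicate point is that a maximal (fine) simplicial refinement need not be unimodular once the cones have dimension $\geq 4$, so the maximal cones of $R$ may genuinely be singular; what must be controlled is that every singularity is pushed into codimension $4$. This is precisely where the hypotheses $k\le 4$ and the lattice-distance-$1$ structure of the facets of a short top are used. Once this codimension bound is secured, the transversality of a nondegenerate hypersurface and the dimension count complete the verification of the theorem's hypothesis.
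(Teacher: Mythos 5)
Your proposal is correct and follows essentially the same route as the paper: the paper treats this corollary as immediate from Theorem~\ref{T:degeneration} together with the standard toric fact (recalled in its Section 1.2) that for a top of dimension at most four, a maximal simplicial refinement leaves singularities of the ambient variety only in codimension $\geq 4$, so a nondegenerate anticanonical hypersurface is automatically smooth. Your second paragraph simply supplies the standard proof of that fact (empty lattice simplices of dimension $\leq 2$ at lattice distance one span smooth cones, plus the transversality dimension count), which the paper leaves implicit.
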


\begin{corollary}
If $k=5$ and $R$ is a smooth fan, then $X_R$ describes a semistable degeneration of Calabi-Yau threefolds.
\end{corollary}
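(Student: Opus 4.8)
The plan is to reduce the corollary directly to Theorem~\ref{T:degeneration}. Recall that in the hypotheses of that theorem $R$ is already a maximal simplicial refinement of the fan over the faces of $\Diamond$; the corollary simply adds the assumption that $R$ is smooth. The only hypothesis of Theorem~\ref{T:degeneration} that is not automatic in the present setting is the smoothness of the hypersurface $X_R$ itself. Once we establish that $X_R$ is smooth, the theorem applies verbatim with $k = 5 \leq 5$, so that $X_R$ describes a semistable degeneration of smooth $(k-2) = 3$-dimensional Calabi-Yau varieties, which are precisely Calabi-Yau threefolds.

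First I would observe that a smooth fan yields a smooth ambient toric variety. Since $R$ is smooth, every cone of $R$ is generated by part of a $\mathbb{Z}$-basis of $N$, so each affine chart of $V_R$ is isomorphic to a product of an affine space and a torus, and $V_R$ is therefore nonsingular. It is worth noting why smoothness is an extra hypothesis here, in contrast to the first corollary: in the range $k-1 \leq 3$ a maximal simplicial refinement already forces the generic anticanonical hypersurface to be smooth, but for $k = 5$ we have $k-1 = 4$, and simpliciality alone only controls orbifold (quasismooth) singularities. This is exactly why we must require that $R$ be smooth rather than merely simplicial.

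Next I would deduce that the nondegenerate anticanonical hypersurface $X_R$ is smooth. The nondegeneracy condition means that $X_R$ meets every torus orbit of $V_R$ transversally, so that the intersection of $X_R$ with each orbit is smooth of the expected dimension. Because $V_R$ is now nonsingular, each orbit closure is smooth, and transversality along the full toric stratification assembles into smoothness of $X_R$ everywhere, including at the points where it meets the lower-dimensional strata. On the open torus this is immediate; the combinatorial nondegeneracy condition is designed precisely to propagate smoothness to the toric divisors and their mutual intersections. I expect this propagation of smoothness from the ambient variety and nondegeneracy to the hypersurface to be the only genuine content of the argument.

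Finally, with $X_R$ smooth and $k = 5 \leq 5$, Theorem~\ref{T:degeneration} applies directly. Its proof shows that the fiber over a general point $z \in \mathbb{C}$ is a smooth threefold Calabi-Yau variety, and that the fiber over $0 \in \mathbb{C}$ is the reduced normal crossings divisor $\sum_{v_i \in S} D_i$ cut out by the summit coordinates. Hence the map $X_R \to \mathbb{C}$ is a semistable degeneration of Calabi-Yau threefolds, as claimed.
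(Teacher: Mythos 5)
Your proposal is correct and follows exactly the route the paper intends: the corollary is stated as an immediate consequence of Theorem~\ref{T:degeneration}, with the only missing ingredient being that a smooth fan makes $V_R$ nonsingular, whence nondegeneracy forces the anticanonical hypersurface $X_R$ to be smooth (this is the same point the paper makes earlier when it notes that simpliciality suffices only for $k-1\leq 3$, while smoothness of the refinement works in any dimension). Your filling-in of that step, and the specialization $k-2=3$, matches the paper's implicit argument.
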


\begin{remark}
The author of \cite{Hu} gives a construction for semistable degenerations which uses a polytopal decomposition of a \emph{simple} $k$-dimensional polytope in $M_\mathbb{R}$ (dual to a simplicial polytope in $N_\mathbb{R}$) to describe a semistable degeneration of $k-1$-dimensional anticanonical hypersurfaces.
This construction is applied to elliptically fibered K3 surfaces in \cite{GP}: in that setting, decomposing a three-dimensional reflexive polytope in $M_\mathbb{R}$ into two tops glued along a common reflexive boundary yields a semistable degeneration of K3 surfaces to a singular fiber with two components.  In contrast, our construction uses a $k$-dimensional top to describe a semistable degeneration of $k-2$-dimensional varieties.  The higher codimension allows us to work with a larger class of polytopes: in particular, we are able to construct semistable degenerations in the case where the smooth fiber is defined by a reflexive boundary polytope which is not simplicial.  As we discuss below, our construction also allows us to extract information about the singular fiber of a degeneration directly from a polytope, rather than from a polytopal decomposition, which allows for a straightforward analysis of degenerations where the singular fiber has many components.
\end{remark}

\section{Constructing tops}

We wish to construct tops with a given reflexive boundary polytope $\Delta$.  We analyze the equivalent problem of classifying the duals of tops with dual reflexive boundary $\Delta^\circ$.  We know that vertices of the dual top must project to lattice points in the dual of the reflexive boundary.  We have already chosen coordinates for the reflexive boundary polytope.  We may use $GL(k,\mathbb{Z})$ to fix the final coordinates of $k-1$ lattice points of the dual top; we will also determine the final coordinate of a $k$th lattice point based on an analysis of the combinatorial structure of our dual top.  

We wish to choose final coordinates for the remaining lattice points that will yield a \textcolor{black}{dual top.  We need to test two properties: our choices must yield a convex polyhedron, and each facet of the polyhedron must be polar dual to a lattice point in $N$.}  

By \textcolor{black}{\cite[Theorem 6]{Mehlhorn}}, in order to guarantee convexity, it suffices to check \textcolor{black}{a property called \emph{local convexity}.}

\begin{definition}
We say a $k$-dimensional triangulated polytope $\Diamond$ is \emph{locally convex} if for every $k-2$-dimensional face $f$ \textcolor{black}{of the triangulation which lies} in the boundary of $\Diamond$, the simplex defined by the two facets containing $f$ is contained in $\Diamond$.
\end{definition}

Any lattice point triangulation of the finite facets of a dual top will yield a lattice point triangulation of the dual reflexive boundary $\Delta^\circ$ upon vertical projection.  The \emph{regular} triangulations of a polytope are precisely those triangulations which can be obtained by projecting the convex hull of a polytope.  Thus, we may organize our search for dual tops by fixing a regular lattice point triangulation of $\Delta^\circ$ and identifying ways to lift this regular triangulation to a dual top.

One natural way to triangulate a $k-1$-dimensional reflexive polytope is to choose a $k-2$-dimensional triangulation of each facet, and then include the origin as the final vertex of each $k-1$-dimensional simplex.  Because the origin is a vertex of this triangulation, if we lift this triangulation to a lattice triangulation of a dual top, the origin must lift to a lattice point of the dual top.  Because we have chosen the convention that tops lie in the half-space $x_k \geq 0$, this lattice point must be $(0, \dots,0,-1)$.  

On the other hand, any dual of a $k$-dimensional short top may be obtained from a triangulation of the boundary of a $k-1$-dimensional reflexive polytope.  This fact depends on the following lemma, which is proved in \cite{BS} for the case $k=3$:

\begin{lemma}\cite{BS}\label{L:allFacetsHavew}
If $\Diamond^\circ$ is the dual of a short top $\Diamond$, then every bounded facet of $\Diamond^\circ$ contains $(0, \dots,0,-1)$.
\end{lemma}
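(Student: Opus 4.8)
The plan is to translate the statement about bounded facets of $\Diamond^\circ$ into a statement about the vertices of $\Diamond$ that support them, and then to separate summit vertices from reflexive-boundary vertices using a recession-direction argument. First I would record that, since $\Diamond$ is the convex hull of its vertices, $\Diamond^\circ = \{\vec{y} : \vec{v}\cdot \vec{y} \geq -1 \text{ for every vertex } \vec{v} \text{ of } \Diamond\}$, so every facet of $\Diamond^\circ$ lies in a hyperplane of the form $\{\vec{y} : \vec{v}\cdot \vec{y} = -1\}$ for some vertex $\vec{v}$. Because $\Diamond$ is a short top, each of its vertices is a lattice point with last coordinate $v_k \in \{0,1\}$: the reflexive-boundary vertices have $v_k = 0$, and the summit vertices have $v_k = 1$. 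Writing $w = (0,\dots,0,-1)$, the key elementary observation is that $\vec{v}\cdot w = -v_k$, so the hyperplane supporting a facet passes through $w$ exactly when its supporting vertex satisfies $v_k = 1$. Thus the lemma reduces to showing that every bounded facet of $\Diamond^\circ$ is supported by a summit vertex.

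The crux is to rule out reflexive-boundary vertices as supports of bounded facets, and for this I would exploit the recession directions of $\Diamond^\circ$. Since $\Diamond$ lies in the half-space $x_k \geq 0$, every vertex satisfies $\vec{v} \cdot (0,\dots,0,1) = v_k \geq 0$; hence for any $\vec{y} \in \Diamond^\circ$ and any $t \geq 0$ the point $\vec{y} + t(0,\dots,0,1)$ still satisfies all the defining inequalities, so $(0,\dots,0,1)$ is a recession direction of $\Diamond^\circ$. Now suppose a facet $F$ is supported by a vertex $\vec{v}$ with $v_k = 0$. Along the direction $(0,\dots,0,1)$ the value of $\vec{v}\cdot\vec{y}$ is unchanged, so for $\vec{y} \in F$ the entire ray $\vec{y} + t(0,\dots,0,1)$, $t \geq 0$, lies in $\Diamond^\circ$ and satisfies $\vec{v}\cdot(\vec{y} + t(0,\dots,0,1)) = -1$; that is, the ray lies in $F$, and $F$ is unbounded. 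Taking the contrapositive, every bounded facet is supported by a vertex with $v_k = 1$.

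To finish I would invoke the characterization of short tops: since $\Diamond$ is short, $\Diamond^\circ$ contains $w = (0,\dots,0,-1)$. If $F$ is a bounded facet supported by $\vec{v}$, then $v_k = 1$ by the previous step, so $\vec{v}\cdot w = -1$; combined with $w \in \Diamond^\circ$ this gives $w \in F$. I expect the main subtlety to be the bookkeeping of polar duality in the unbounded setting: because the origin sits on the reflexive-boundary facet of $\Diamond$ rather than in its interior, $\Diamond^\circ$ is an unbounded polyhedron, and one must be careful that the facets are genuinely cut out by the vertex inequalities and that the recession-cone computation correctly distinguishes the bounded facets. Once the recession direction $(0,\dots,0,1)$ is in hand, however, the separation of summit vertices from boundary vertices — and hence the whole argument — is immediate, and it is uniform in $k$, so no dimension restriction is needed.
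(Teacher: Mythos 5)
Your proposal is correct and follows essentially the same route as the paper's proof: both rest on the dichotomy that a short top's vertices have last coordinate $0$ or $1$, that facets of $\Diamond^\circ$ are cut out by the vertex inequalities $\vec{v}\cdot\vec{y}\geq -1$, and that height-$0$ vertices yield vertical unbounded facets while height-$1$ vertices yield facets through $(0,\dots,0,-1)$. The paper merely asserts these last two facts, whereas you supply the justification (the recession-direction argument for unboundedness, and $w\in\Diamond^\circ$ plus $\vec{v}\cdot w=-1$ for containment), so yours is a more detailed rendering of the same argument.
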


\begin{proof}
Because $\Diamond$ is a short top, any vertex of $\Diamond$ must have either $x_k=0$ or $x_k=1$.  The vertices of $\Diamond$ are in one-to-one correspondence with the facets of $\Diamond^\circ$.  Vertices at $x_k=0$ define vertical, unbounded facets of $\Diamond^\circ$.  Vertices at $x_k=1$ define facets of $\Diamond^\circ$ containing $(0, \dots,0,-1)$.
\end{proof}

Because every bounded facet of a dual short top contains $(0, \dots,0,-1)$, we may always find a triangulation of the bounded facets of a dual short top that projects to a triangulation of the dual reflexive boundary polytope where every simplex has the origin as a vertex.  We may use this observation to create an algorithm for constructing short tops:

\begin{algorithm}\label{A:shortTopClass}

\textcolor{black}{
\begin{description}
\item[Input]A $(k-1)$-dimensional dual boundary reflexive polytope $\Delta^\circ$ and a regular triangulation $\mathcal{T}$ of the boundary of $\Delta^\circ$.  Let $v_1, \dots, v_q$ be the lattice points of $\Delta^\circ$ which appear as vertices in the triangulation $\mathcal{T}$.  We order the lattice points so that $v_{q-k},\dots,v_{q-1}$ are vertices of a facet of $\mathcal{T}$, and $v_q=(0,\dots,0)$.
\item[Output]A finite list of divisibility conditions and a finite system of linear inequalities on $q-k-1$ integer parameters.  Together, these describe the coordinates of all dual tops corresponding to $\Delta^\circ$ and $\mathcal{T}$, up to overall isomorphism.
\end{description}
}

\textcolor{black}{
\begin{description}
\item[Procedure] \hfill \\
\begin{itemize}
\item Let $a_{v_j}$ be the minimum $y_k$ value of the dual top that projects to $v_j$.
\item Set $a_{(0,\dots,0)}=-1$.
\item Set $a_{v_{q-k}}=-1,\dots,a_{v_{q-1}}=-1$.
\item Now, $a_{v_1},\dots, a_{v_{q-k-1}}$ are the minimum $y_k$ values corresponding to each of the remaining $q-k-1$ lattice points of $\Delta^\circ$.  We will identify values of $a_1,\dots, a_{q-k-1}$ which will result in a dual top.  
\item For each facet $f$ of the regular triangulation $\mathcal{T}$, let $B$ be the $(k-1) \times (k-1)$ matrix where the rows consist of the vertices \textcolor{black}{$v_{j_1}, \dots, v_{j_{k-1}}$} of $f$.  \textcolor{black}{Let $D$ be the Smith normal form of $B$, where $D$ has diagonal entries $d_1, \dots, d_{k-1}$, and write $B = U D V$, where $U$ and $V$ are in $GL_{k-1}(\mathbb{Z})$.  Let $\vec{a}$ be the integer column vector $(a_{v_{j_1}}+1, \dots, a_{v_{j_{k-1}}}+1)^T$.  Return the $k-1$ divisibility conditions $d_i | (U^{-1} \vec{a})_i$.}
\item For each $k-2$-dimensional face $e$ of the regular triangulation $\mathcal{T}$, return a linear inequality in $k+1$ of the $a_{v_j}$ which guarantees that the dual top is locally convex at a face corresponding to $e$.  The $a_{v_j}$ used in this inequality correspond to the $k+1$ lattice points which are vertices of a facet of $\mathcal{T}$ containing $e$.
\end{itemize}
\end{description}
}

\end{algorithm}

\textcolor{black}{Algorithm~\ref{A:shortTopClass} lifts the simplices described by combining a facet of the regular triangulation $\mathcal{T}$ with the origin to facets of a triangulation of the boundary of a dual top.  The divisibility conditions are designed to ensure that the facets of the dual top correspond to points with integer coordinates under polar duality; the system of linear inequalities will ensure that the constructed dual top is convex.}

\textcolor{black}{\begin{lemma}
Let $f$ be a facet of a regular triangulation of the boundary of a $k-1$-dimensional reflexive polytope $\Delta^\circ$, let \textcolor{black}{$v_{j_1} = (y_{11}, \dots, y_{1(k-1)}), \dots, v_{j_{k-1}} = (y_{(k-1)1}, \dots, y_{(k-1)(k-1)})$} be the vertices of $f$, and let \textcolor{black}{$B=UDV$ and $\vec{a}$ be as defined in Algorithm~\ref{A:shortTopClass}}.  The $k-1$-dimensional simplex in $M_\mathbb{R}$ determined by the set $\mathcal{S} = \{(y_{11}, \dots, y_{1(k-1)},a_1), \dots, (y_{(k-1)1}, \dots, y_{(k-1)(k-1)},a_{k-1}), (0, \dots, 0,-1) \}$ corresponds to a lattice point in $N$ under polar duality if and only if \textcolor{black}{$d_i | (U^{-1} \vec{a})_i$.}
\end{lemma}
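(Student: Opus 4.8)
The plan is to translate the polar-duality condition into a single integer linear system in $\vec{n}$ and then read off its solvability criterion from the Smith normal form. The simplex spanned by $\mathcal{S}$ lies in a unique affine hyperplane $H \subset M_\mathbb{R}$: because $f$ is a facet of a triangulation of the boundary of $\Delta^\circ$, its vertices $v_{j_1}, \dots, v_{j_{k-1}}$ are affinely independent, and since $\Delta^\circ$ is reflexive the origin is interior and hence does not lie on $\mathrm{aff}(f)$. Consequently the $k$ points of $\mathcal{S}$ are affinely independent and $H$ is genuinely $(k-1)$-dimensional. By the standard correspondence between the faces of a polytope and the faces of its polar dual, together with the defining property of a top that its bounded facets lie on hyperplanes of the form $\vec{y} \cdot \vec{n} = -1$, a facet lying in $H$ corresponds to a lattice point of $N$ exactly when there is some $\vec{n} = (n_1, \dots, n_k) \in N$ with $\vec{n} \cdot \vec{y} = -1$ for every $\vec{y} \in H$; equivalently, $\vec{n} \cdot P = -1$ for each of the $k$ points $P \in \mathcal{S}$. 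So the lemma reduces to deciding when this system has an integer solution $\vec{n}$.

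First I would evaluate the condition on the apex $(0, \dots, 0, -1)$: this forces $-n_k = -1$, i.e.\ $n_k = 1$, which is automatically integral and plays no further role. Next I would write out the remaining $k-1$ equations coming from the lifted vertices $(y_{i1}, \dots, y_{i(k-1)}, a_i)$, where I identify each $a_i$ with $a_{v_{j_i}}$. Substituting $n_k = 1$, each becomes $\sum_{\ell=1}^{k-1} n_\ell y_{i\ell} = -(a_i + 1)$, which in matrix form is exactly $B \vec{n}' = -\vec{a}$, where $\vec{n}' = (n_1, \dots, n_{k-1})^T$ and $\vec{a}$ is the vector defined in Algorithm~\ref{A:shortTopClass}. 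Thus the existence of the desired lattice point $\vec{n} \in N$ is equivalent to the existence of an integer solution $\vec{n}'$ of this square system.

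I would then invoke the Smith normal form $B = U D V$ with $U, V \in GL_{k-1}(\mathbb{Z})$ and $D = \mathrm{diag}(d_1, \dots, d_{k-1})$. Rewriting $B \vec{n}' = -\vec{a}$ as $D (V \vec{n}') = -U^{-1} \vec{a}$ and setting $\vec{w} = V \vec{n}'$, I use that $V \in GL_{k-1}(\mathbb{Z})$ to note that $\vec{n}'$ is integral if and only if $\vec{w}$ is. The system decouples into $d_i w_i = -(U^{-1} \vec{a})_i$, and since $U^{-1} \vec{a}$ is an integer vector (as $U^{-1} \in GL_{k-1}(\mathbb{Z})$ and $\vec{a}$ is integral), this has an integer solution precisely when $d_i \mid (U^{-1} \vec{a})_i$ for every $i$, the sign being irrelevant to divisibility. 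This is exactly the stated condition. The affine independence established above shows $B$ is nonsingular, so every $d_i$ is nonzero and the rational solution $\vec{n}'$ is unique; the divisibility conditions are therefore precisely the obstruction to its integrality.

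The step I expect to require the most care is the first: correctly identifying that ``corresponds to a lattice point in $N$ under polar duality'' means the existence of an integral normal vector $\vec{n}$ under the $-1$ affine normalization, which is where the definition of a top is essential. Once the condition is phrased as the integer solvability of $B \vec{n}' = -\vec{a}$, the remainder is a routine application of the Smith normal form, and the bookkeeping---that $n_k = 1$ comes for free and that the sign of $\vec{a}$ does not affect divisibility---is straightforward.
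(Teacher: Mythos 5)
Your proof is correct and takes essentially the same route as the paper's: both reduce the polar-duality condition to the integer solvability of the square system $B\vec{n}' = -\vec{a}$ (the paper phrases this as computing the null space of the augmented matrix $\left[\,B \mid \vec{a}\,\right]$ and asking whether $B^{-1}\vec{a}$ is integral), and then read off the divisibility conditions from the Smith normal form using the fact that $U, V \in GL_{k-1}(\mathbb{Z})$. Your write-up is, if anything, slightly more careful than the paper's on two minor points—the sign of the normal vector and the reason $B$ is nonsingular—but the underlying argument is identical.
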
}

\textcolor{black}{\begin{proof}
We wish to find the equation of the hyperplane determined by $\mathcal{S}$.  We may do so by finding $\vec{x} \in N_\mathbb{R}$ such that $\vec{x} \cdot (v_j-(0, \dots, 0,-1))=0$.  This is equivalent to finding the null space of the augmented matrix 
\[A = \left[\begin{BMAT}(@){c|c}{ccc}
& a_1+1 \\
B & \vdots \\
 & a_{k-1} +1
  \end{BMAT}\right].\]
Applying Gauss-Jordan elimination, we find that the null space is generated by the vector
\[\vec{x} = \left[\begin{BMAT}(@){c}{c|c} B^{-1} \begin{pmatrix}a_1+1 \\ \vdots \\ a_{k-1}+1 \end{pmatrix} \\ 1 \end{BMAT}\right].\]
The equation of our hyperplane is given by $\vec{x} \cdot \vec{y} = \vec{x} \cdot (0, \dots, 0,-1)$, or equivalently $\vec{x} \cdot \vec{y} = -1$.  Thus, $\vec{x}$ is precisely the point in $N_\mathbb{R}$ dual to our hyperplane, and we see that $\vec{x}$ will lie in $N$ if and only if $B^{-1} \begin{pmatrix}a_1+1 \\ \vdots \\ a_{k-1}+1\end{pmatrix}$ has integer entries.  But this will hold if and only if \textcolor{black}{$d_i | (U^{-1} \vec{a})_i$, because $U$ and $V$ are in $GL_{k-1}(\mathbb{Z})$}.
\end{proof}}

\textcolor{black}{We now describe the process of checking local convexity.  Let $e$ be a $k-2$-dimensional face of the triangulation $\mathcal{T}$, and let $v_1, \dots, v_{k-2}$ be the vertices of $e$.  The face $e$ is contained in precisely two facets $f_1$ and $f_2$ of $\mathcal{T}$; let $v_{k-1}$ and $v_{k}$ be the remaining vertices of these facets.  Checking local convexity at the lift of $e$ involves checking whether the simplex defined by the lifts of $f_1$ and $f_2$ is contained in our candidate dual top.  We may test containment by testing the orientation of the simplex.  In turn, we compute the orientation of this simplex by checking the sign of the determinant
\[\begin{vmatrix}v_{11} & \dots & v_{1(k-1)}& a_{v_1}& 1\\
\vdots & \dots & & \vdots & 1 \\
v_{k1} & \dots & v_{k(k-1)}& a_{v_k}& 1\end{vmatrix}.\]
By expanding along the $k$th column, we see that this determinant corresponds to a linear condition in the integer parameters $a_{v_j}$.}

\textcolor{black}{\begin{theorem}For a fixed reflexive boundary polytope $\Delta^\circ$ and regular triangulation $\mathcal{T}$ of the boundary of $\Delta^\circ$, there exist infinitely many choices of the parameters $a_{v_1},\dots, a_{v_{q-k-1}}$ which satisfy the divisibility conditions and system of linear inequalities produced by Algorithm~\ref{A:shortTopClass}.
\end{theorem}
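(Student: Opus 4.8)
The plan is to reformulate both families of constraints in the shifted variables $b_{v_j} = a_{v_j}+1$ and then to exhibit the feasible set as a lattice intersected with a cone, where an explicit scaling symmetry forces infinitely many integer points. First I would record that, read in the $b$--variables, each divisibility condition is the homogeneous congruence $B^{-1}\vec b_f \in \mathbb{Z}^{k-1}$, i.e. $\vec b_f \in B\,\mathbb{Z}^{k-1}$, where $\vec b_f$ collects the entries $b_{v_{j_i}}$ attached to a facet $f$; this is exactly the content of the preceding lemma, since $d_i \mid (U^{-1}\vec a)_i$ is equivalent to $\vec b_f$ lying in the image lattice of $B$. Hence the set $L \subseteq \mathbb{Z}^{q-k-1}$ of integer vectors $(b_{v_1},\dots,b_{v_{q-k-1}})$ satisfying all divisibility conditions (with the pinned coordinates set to $0$) is a subgroup. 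I would show it has finite index by verifying $M\,\mathbb{Z}^{q-k-1} \subseteq L$ for $M = \operatorname{lcm}_f |\det B_f|$: if every $b_{v_j}$ is divisible by $M$, then $\vec b_f \in |\det B_f|\,\mathbb{Z}^{k-1} = B_f\,\operatorname{adj}(B_f)\,\mathbb{Z}^{k-1} \subseteq B_f\,\mathbb{Z}^{k-1}$. I would also note that the all--$(-1)$ assignment, namely $\vec b = \vec 0$, satisfies every divisibility condition, so $\vec 0 \in L$.

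Next I would establish that the region $P \subseteq \mathbb{R}^{q-k-1}$ cut out by the local--convexity inequalities has nonempty interior, which is the one place where the regularity of $\mathcal{T}$ is essential. Because $\mathcal{T}$ is regular, there is a height function on the boundary lattice points whose lower hull induces $\mathcal{T}$; coning this lift over the origin, placed sufficiently far below, realizes the coned triangulation $\operatorname{cone}(\mathcal{T},0)$ of $\Delta^\circ$ as a regular triangulation. Adding the affine function that flattens the distinguished maximal cell and then shifting, I obtain a strictly convex lift $\alpha$ equal to $-1$ at the origin and at the pinned boundary vertices, while remaining strictly convex across every other interior face. The values $\alpha(v_j)$ at the free boundary points are then a real point of the open set defined by the strict inequalities, so that set, and hence $P$, is full dimensional.

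The key step is to identify the shape of the feasible set. Writing each condition as $\varepsilon_e \det M_e(\vec a) > 0$ and expanding along the height column, I would observe that the substitution $a_{v_i} \mapsto -1 + t(a_{v_i}+1)$ applied to every height (the pinned heights are fixed points of this map) replaces the height column by $t$ times itself plus $(t-1)$ times the column of ones; each determinant is therefore multiplied by $t$, and every inequality is preserved for $t>0$. Thus the open feasible set is invariant under $\vec a \mapsto -\vec 1 + t(\vec a + \vec 1)$, which says precisely that it has the form $-\vec 1 + K$ for a full--dimensional open convex cone $K$ with apex at the origin; since the defining inequalities have integer coefficients, $K$ is rational. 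Combining the two structures, an integer assignment solves the whole system exactly when its $b$--vector lies in $L \cap K$, and the apex of the cone corresponds to $\vec 0 \in L$. A full--dimensional open rational cone contains balls of arbitrarily large radius, obtained by scaling any fixed ball it contains, and a finite--index sublattice is relatively dense, so $L \cap K$ is infinite; equivalently, scaling any single integral solution $\vec b^{(0)} \in L \cap K$ yields the explicit infinite family $b^{(t)}_{v_j} = t\,b^{(0)}_{v_j}$, $t = 1,2,3,\dots$.

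The main obstacle is the interaction between the arithmetic and the geometric constraints: divisibility confines us to a sublattice while convexity confines us to a cone, and a priori a sublattice could miss the interior of a cone entirely. What makes the argument succeed is the coincidence that the degenerate all--$(-1)$ assignment is simultaneously the apex of the convexity cone and a point of the divisibility sublattice; this lets the positive--scaling symmetry push one feasible integer point to infinity while respecting both constraints at once. The only ingredient requiring genuine hypotheses rather than bookkeeping is the nonemptiness of the interior of $P$, and it is exactly the regularity of $\mathcal{T}$ that supplies the strictly convex realizing lift.
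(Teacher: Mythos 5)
Your proposal is correct, but it takes a genuinely different route from the paper. The paper's proof is a one-parameter specialization: it sets every free parameter except $a_{v_1}$ equal to $-1$ (so all divisibility conditions are trivially satisfied), notes that each local-convexity determinant then becomes a linear function of $a_{v_1}$ vanishing at $a_{v_1}=-1$, and asserts that the orientation convention makes all of these conditions point the same way, collapsing the system to $a_{v_1}\geq -1$. You instead move all parameters at once: divisibility cuts out a finite-index sublattice $L$ containing $\vec{0}$, local convexity cuts out a rational polyhedral cone with apex at the all-$(-1)$ point (your scaling computation), regularity of $\mathcal{T}$ makes that cone full-dimensional, and lattice density finishes. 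What your route buys is robustness, and in fact it closes a genuine gap in the paper's own argument: the claim that all the determinant conditions impose the same inequality is false in general. For $k=3$, take $\Delta^\circ$ to be the square with vertices $(\pm 1,\pm 1)$ and $\mathcal{T}$ the (regular) triangulation of its boundary using all eight lattice points; with all heights but one pinned at $-1$, the ridge conditions at the two neighbors of $v_1$ force $a_{v_1}\geq -1$, while the ridge condition at $v_1$ itself forces $a_{v_1}\leq -1$ (it reads $a_u+a_w\geq 2a_{v_1}$ when $v_1$ is an edge midpoint with neighbors $u,w$, and $a_u+a_w+1\geq a_{v_1}$ when $v_1$ is a corner), so the paper's one-parameter family degenerates to the single point $a_{v_1}=-1$. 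Infinitely many dual tops still exist for this input, but only by raising several heights simultaneously, which is exactly what your cone-scaling argument does. Consistently with this, yours is the proof that actually uses the regularity hypothesis (to produce a strictly convex lift and hence a full-dimensional cone); the paper's proof never invokes it, and no choice of $v_1$ can rescue it in the example above. The one soft spot in your write-up is the construction of the strictly convex lift: the phrases about the lower hull inducing $\mathcal{T}$ and coning the origin ``sufficiently far below'' are informal, and the far-below coning works only because each facet of $\mathcal{T}$ lies inside a facet of $\Delta^\circ$, hence inside a supporting hyperplane; but this is no less precise than the paper's own treatment of regularity, and the idea is sound.
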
}

\textcolor{black}{\begin{proof}
Because we have a finite number of divisibility conditions, there are an infinite number of choices for each integer $a_{v_j}$ satisfying all of the divisibility conditions.  In particular, note that $a_{v_j}=-1$ will always satisfy all of the divisibility conditions.  Now, suppose we set $a_{v_j}=-1$ for all $j>1$.  In this setting, the only nontrivial convexity conditions are given by the sign of a determinant of the form
\[\begin{vmatrix}v_{11} & \dots & v_{1(k-1)}& a_{v_1}& 1\\
w_{21} & \dots & w_{2(k-1)}& -1 & 1\\
\vdots & \dots & & \vdots & 1 \\
w_{k1} & \dots & w_{k(k-1)}& -1 & 1\end{vmatrix}.\]
These determinants will change sign when $a_{v_1}=-1$; because we have chosen the convention that our dual tops extend in the positive $x_k$ direction, we may reduce to the single inequality $a_{v_1}\geq-1$.
\end{proof}}

To classify all polar duals of short tops arising from a fixed dual reflexive boundary, we must enumerate all regular triangulations of the boundary and then eliminate the duplicate dual tops which arise from more than one triangulation.  For example, any regular triangulation of the boundary will produce the dual top which has all vertices at $y_k=-1$.  In general, enumerating the regular triangulations of the boundary of a reflexive polytope is highly computationally complex.  However, for many well-known families of reflexive polytopes, describing regular triangulations is much simpler.  As an example, we classify all $k$-dimensional tops with the standard $(k-1)$-dimensional simplex as dual reflexive boundary.

\begin{proposition}[The standard simplex]\label{P:simpTops}

Let $\Delta^\circ$ be the $(k-1)$-dimensional simplex with vertices at $(1,0,\dots, 0), \dots, (0,\dots, 0, 1)$, and $(-1,\dots, -1)$, and let $\Delta$ be the polar dual of $\Delta^\circ$.  Then any top with base $\Delta$ is equivalent to one of the following:

\begin{enumerate}
\item A member of the one-parameter family of short tops with summit vertices given by

\begin{align*}
(0, &\dots, 0, 1)\\
(a+1, 0, &\dots, 0, 1)\\
(0, a+1, &\dots, 0, 1)\\
&\vdots\\
(0, 0, \dots&, a+1, 1)
\end{align*}

\noindent where $a \geq -1$ is an integer.
\item The top with a single summit vertex given by $(-1,-1,\dots,-1,k)$.
\end{enumerate}
\end{proposition}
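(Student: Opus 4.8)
The plan is to separate the tops with base $\Delta$ into short and non-short tops, treating the short ones with Algorithm~\ref{A:shortTopClass} and the non-short ones by a direct analysis of the polar dual. The whole argument rests on two special features of the standard simplex $\Delta^\circ$: it is a smooth Fano simplex, so its only lattice points are its $k$ vertices together with the origin, and each of its facets is a unimodular $(k-2)$-simplex.

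For the short tops I would first note that, because each facet of $\Delta^\circ$ is already a simplex, the boundary $\partial\Delta^\circ$ admits a unique triangulation $\mathcal{T}$, so there is exactly one regular triangulation to feed into Algorithm~\ref{A:shortTopClass}. Running the algorithm, I fix the height of the origin and of $k-1$ of the vertices to $-1$, which leaves a single free parameter $a$. Unimodularity makes every matrix $B$ have determinant $\pm 1$, so each Smith normal form is the identity and the divisibility conditions $d_i \mid (U^{-1}\vec a)_i$ hold for every integer $a$. With only one free parameter the local-convexity inequalities collapse, exactly as in the proof of the preceding theorem, to the single inequality $a \geq -1$. Finally I would dualize this height data using the hyperplane computation of the lemma immediately preceding the proposition; this recovers the summit vertices $(0,\dots,0,1)$ and $(0,\dots,a+1,\dots,0,1)$ of case (1), showing that every short top with base $\Delta$ belongs to the one-parameter family.

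For the non-short tops I would invoke the contrapositive of Lemma~\ref{L:allFacetsHavew}: a top fails to be short exactly when its dual has a bounded facet avoiding $(0,\dots,0,-1)$. Since the lattice points of $\Delta^\circ$ are only its vertices and the origin, the bounded part of such a dual polyhedron is a lattice subdivision of $\Delta^\circ$ all of whose vertices lie among these points, and hence is either the star subdivision from the origin or the single simplex $\Delta^\circ$ itself. In the star case the origin must lift to an integral vertex, which forces the summit height to equal $1$ and returns us to the short family; so a genuinely non-short top is a pyramid $\mathrm{conv}(\Delta,(p,h))$ over $\Delta$ with a single apex. I would then require, facet by facet, that each side facet be cut out by an equation of the form $\vec x\cdot \vec n=-1$ with $\vec n$ integral. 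The facet of $\Delta$ dual to a vertex of $\Delta^\circ$ spans with the apex a hyperplane whose dual point is integral precisely when certain divisibility conditions relating $h$, the $p_i$, and $k$ are met; reducing $p$ modulo $h$ by the shear subgroup and using the symmetries of the simplex normalizes the apex to $(-1,\dots,-1,k)$, which is case (2).

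The main obstacle is the non-short half. The short-top analysis is essentially bookkeeping once unimodularity trivializes both the divisibility and the convexity constraints. The non-short analysis, by contrast, needs genuinely new input: one must show that no subdivision of $\Delta^\circ$ other than the star and the trivial one can serve as a dual floor, keep careful track of the apex up to the shear subgroup and the simplex symmetries (bearing in mind that the apex height is itself an invariant of the top), and verify that the side-facet integrality conditions leave exactly the stated representative. Carrying out this normalization of the apex and confirming the integrality of every side facet is where the real work lies.
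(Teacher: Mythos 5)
Your short-top half is correct and is essentially the paper's own argument: the boundary of $\Delta^\circ$ has a unique lattice triangulation, Algorithm~\ref{A:shortTopClass} applies, unimodularity trivializes the divisibility conditions, convexity collapses to $a\geq -1$, and dualizing the heights yields the summit vertices of case (1). The gap is in the non-short half, exactly where you flagged it, and it is worse than you suspect: the normalization you assert cannot actually be carried out. Your reduction of a non-short top to a pyramid $\mathrm{conv}\bigl(\Delta\times\{0\},(p,h)\bigr)$ with $h\geq 2$ is sound (it parallels the paper's observation, via Lemma~\ref{L:allFacetsHavew}, that the only subdivision of $\Delta^\circ$ avoiding the origin is the trivial one). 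But now run your own integrality check: the side facet lying over the facet of $\Delta$ dual to the vertex $e_i$ of $\Delta^\circ$ has normal $\bigl(e_i,(-1-p_i)/h\bigr)$, and the one over the facet dual to $(-1,\dots,-1)$ has normal $\bigl((-1,\dots,-1),(-1+\textstyle\sum_i p_i)/h\bigr)$; integrality of these normals is precisely $p_i\equiv -1 \pmod{h}$ for all $i$ together with $h\mid k$. The shear subgroup $(x',x_k)\mapsto (x'+x_k b,\,x_k)$ changes $p$ only modulo $h$, the symmetries of the simplex merely permute coordinates, and, as you yourself note, $h$ is an invariant: any equivalence of tops carries the base facet to the base facet and the upper half-space to itself, hence preserves the last coordinate. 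So the best normal form you can reach is apex $(-1,\dots,-1,h)$ for an \emph{arbitrary} divisor $h\geq 2$ of $k$, not only $h=k$.

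For composite $k$ this is not just a hole in the argument but a counterexample to the statement. Take $k=4$ and apex $(-1,-1,-1,2)$: the pyramid over $\Delta$ is a lattice polytope whose non-base facets have integral normals $(1,0,0,0)$, $(0,1,0,0)$, $(0,0,1,0)$, $(-1,-1,-1,-2)$, so it is a top; it is not short (it has a lattice point at height $2$), and it is not equivalent to the top with apex $(-1,-1,-1,4)$, since equivalences preserve heights. The paper's own proof buries this issue in the sentence claiming that $k$ vertices and ``$k$ degrees of freedom'' force all such dual tops to be equivalent; in fact, after fixing the base there are only $k-1$ shear parameters plus a finite symmetry group, and the untouchable invariant is exactly $h$. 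So your instinct that the apex normalization is ``where the real work lies'' is right, and doing that work shows case (2) of the classification is complete only when $k$ is prime (e.g.\ the three-dimensional $E_6$ case highlighted in the paper's remark); neither your proposed proof nor the paper's can be completed as stated for composite $k$.
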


\begin{proof}
There is only one lattice point triangulation of the boundary of $\Delta^\circ$.  Applying Algorithm~\ref{A:shortTopClass}, we obtain the family of short tops listed above.

On the other hand, there is only one lattice point triangulation of $\Delta^\circ$ which does not use the origin, namely, the triangulation consisting of a single simplex.  There are $k$ vertices of $\Delta^\circ$ and we have $k$ degrees of freedom, so any dual top which does not have the point below the origin as a lattice point must be equivalent to any other dual point with the same property.  We choose the dual top with the following vertices as our canonical form:

\begin{align*}
(1, &\dots, 0, 0)\\
(0, 1, &\dots, 0, 0)\\
&\vdots\\
(0, 0, &\dots, 1, 0)\\
(-1, -1, \dots&, -1, -1)
\end{align*}

This dual top has a single bounded facet, which corresponds under polar duality to the vertex $(-1,-1,\dots,-1,k)$ of the top.

\end{proof}

\begin{remark}
In three dimensions, the second case in Proposition~\ref{P:simpTops} corresponds to an $E_6$ singularity. \cite{BS} 
\end{remark}

Because every facet of a $k$-dimensional smooth Fano polytope contains precisely $k$ lattice points, the boundary of any smooth Fano polytope admits a unique lattice triangulation.  
We implemented a procedure in Sage to compute all isomorphism classes of short tops with dual reflexive base $\Delta^\circ$ a 3-dimensional smooth Fano polytope.\cite{Sage}  For a smooth Fano polytope with $m$ vertices, we obtain an infinite family of short tops with $m-3$ parameters.  \textcolor{black}{(Note that the fact that smooth Fano polytopes are smooth ensures that the divisibility conditions on our parameters will be trivial.)}

\begin{example}
Let $\Delta^\circ$ be the smooth Fano polytope with vertices at $\left(1, 0, 0\right)$, $\left( 0, 1, 0 \right)$, $\left( 0, 0, 1\right)$, $\left( -1, 0, 0\right)$, and $\left( 0, -1, -1\right)$.  We illustrate $\Delta^\circ$ and its polar dual $\Delta$ in Figures~\ref{F:fano} and \ref{F:fanodual}.
%

\begin{figurehere}
\scalebox{.3}{\includegraphics{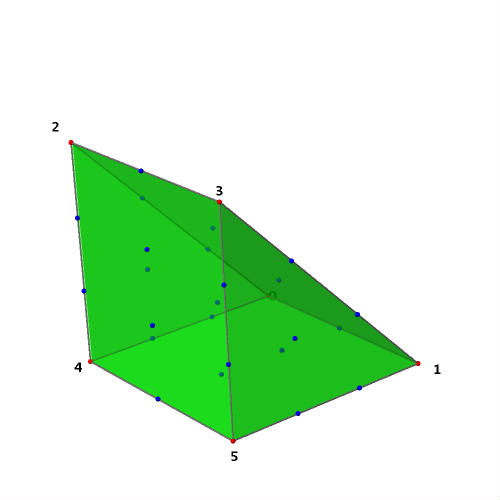}}
\caption{Polar dual polytope $\Delta$}
\label{F:fanodual}
\end{figurehere}

\begin{figurehere}
\scalebox{.3}{\includegraphics{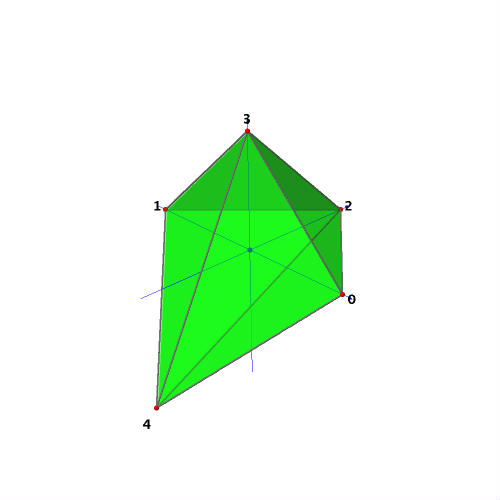}}
\caption{The Fano polytope $\Delta^\circ$}
\label{F:fano}
\end{figurehere}


The dual top $\Diamond^\circ$ has two free integer parameters, $a_1$ and $a_2$.  To make $\Diamond^\circ$ convex, we require $a_1 \geq -1$ and $a_2 \geq -1$.  We illustrate the summits of the resulting tops for the parameter choices $a_1=0$, $a_2=4$ and $a_1=4$, $a_2=0$ in Figures~\ref{F:summit1} and \ref{F:summit2}.

\begin{figurehere}
\scalebox{.3}{\includegraphics{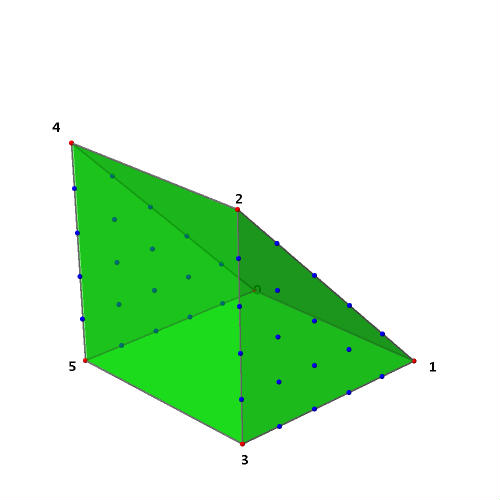}}
\caption{Summit of the short top for $a_1=0$ and $a_2=4$}
\label{F:summit1}
\end{figurehere}

\begin{figurehere}
\scalebox{.3}{\includegraphics{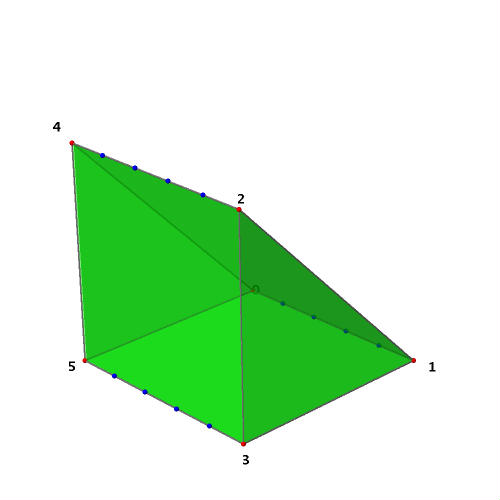}}
\caption{Summit of the short top for $a_1=4$ and $a_2=0$}
\label{F:summit2}
\end{figurehere}


\end{example}
%

\section{Semistable degenerations of K3 surfaces}

Let $\Sigma$ be a maximal simplicial fan which refines $R$.  Then the map $X_\Sigma \to X_R$ resolves the singularities of the degeneration.  Because $\Sigma$ is maximal, every lattice point in the summit of $\Diamond$ determines a toric divisor in the preimage of $0 \in \mathbb{C}$.  Divisors corresponding to lattice points strictly in the interior of facets of $\Diamond$ will not intersect $X_\Sigma$.  

We say a toric divisor \emph{splits} if its intersection with $X_\Sigma$ has more than one component.  Whether a toric divisor splits is determined by the structure of $\Diamond$ and $\Diamond^\circ$: the divisor corresponding to a lattice point splits if that lattice point lies in the relative interior of a $k-2$-face $\theta$ of $\Diamond$, and the dual face $\theta^\circ$ also contains lattice points in its interior.  Note that $\theta^\circ$ will be a one-face of $\Diamond^\circ$, also known as an edge.  When $\Diamond$ is a short top, the dual to any face $\theta$ intersecting the summit of $\Diamond$ will contain the point $(0,\dots,0,-1)$.  In order to have splitting, $(0,\dots,0,-1)$ must lie in the interior of an edge of $\Diamond^\circ$.  Because lattice points of the dual top project vertically to lattice points of the dual reflexive boundary, such an edge will have precisely 3 lattice points.  It follows from standard results on Calabi-Yau hypersurfaces in toric varieties (cf.~\cite{CoxKatz}) that when a toric divisor in a short top splits, it will yield precisely 2 components.

Now, let $\Diamond$ be a four-dimensional short top.  The nondegenerate anticanonical hypersurfaces $X_R$ describe semistable degenerations of K3 surfaces by Theorem~\ref{T:degeneration}.  We can classify four-dimensional short tops based on the position of the point $w = (0,\dots,0,-1)$ in the dual top.  We have the following cases:

\begin{enumerate}
\item The point $w$ lies in the interior of a facet of $\Diamond^\circ$.
\item The point $w$ lies in the interior of a two-face of $\Diamond^\circ$.

\item \begin{enumerate}
\item The point $w$ lies in the interior of an edge of $\Diamond^\circ$.
\item The point $w$ is a vertex of $\Diamond^\circ$.
\end{enumerate}

\end{enumerate}

Under polar duality, these cases correspond to the following descriptions of the summit of the top:

\begin{enumerate}
\item The summit of $\Diamond$ consists of a single lattice point.
\item The summit is an edge of $\Diamond$.

\item \begin{enumerate}
\item The summit is a two-face of $\Diamond$.
\item The summit is a facet of $\Diamond$, and therefore a three-dimensional lattice polytope.
\end{enumerate}

\end{enumerate}

We wish to understand the relationship between the combinatorial structure of the summit of the short top and the semistable degeneration of K3 surfaces described by the short top.  Semistable degenerations of K3 surfaces have been classified:

\begin{theorem}\cite{Kulikov,Persson,FM}\label{T:semistableK3}
Let $\pi: X \to D$ be a semistable degeneration of K3 surfaces with trivial canonical bundle $\omega_{X} \cong \mathcal{O}_{X}$.  Let $X_0 = \pi^{-1}(0)$, and assume all components of $X_0$ are K\"{a}hler.  Then either:

\begin{enumerate}
\item $X_0$ is a smooth K3 surface.
\item $X_0$ is a chain of elliptic ruled components with rational surfaces at
each end.
\item $X_0$ consists of rational surfaces meeting along rational curves.  The dual graph of $X_0$ has the sphere as topological support.
\end{enumerate}
\end{theorem}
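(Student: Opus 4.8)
The plan is to follow the strategy of Kulikov and Persson--Pinkham, as refined by Friedman--Morrison: extract numerical constraints on the components of $X_0$ from the hypotheses $\omega_X \cong \mathcal{O}_X$ and semistability, and then match the combinatorics of the configuration against the Hodge-theoretic data forced by the general fiber being a K3 surface. First I would record the fundamental adjunction computation. Write $X_0 = \sum_i V_i$ with smooth components meeting normally, double curves $D_{ij} = V_i \cap V_j$, and triple points where three components meet. Since $X_0$ is a fiber over the disc $D$, we have $\mathcal{O}_X(X_0) \cong \mathcal{O}_X$, so $V_i \sim -\sum_{j \neq i} V_j$; combining this with $\omega_X \cong \mathcal{O}_X$ and adjunction yields $\omega_{V_i} \cong \mathcal{O}_{V_i}(-\sum_{j} D_{ij})$. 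Thus on every component the cycle of double curves is an anticanonical divisor. A component with no double curves is therefore a smooth K3 surface and must be all of $X_0$, giving case (1); every other component has $-K_{V_i}$ effective and so is rational or ruled.

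Next I would introduce the two bookkeeping tools that govern the global structure. The \emph{triple point formula}, which follows from the smoothness of $X$ along each double curve, gives $(D_{ij}^2)_{V_i} + (D_{ij}^2)_{V_j} = -t_{ij}$, where $t_{ij}$ counts the triple points lying on $D_{ij}$; this ties the self-intersections of the double curves to the presence of triple points. In parallel, the unipotent monodromy $T$ about $0$ has logarithm $N = \log T$ acting on $H^2$ of the general fiber, and for a weight-two K3 Hodge structure one always has $N^3 = 0$. The trichotomy $N = 0$, $N^2 = 0 \neq N$, and $N^2 \neq 0$ is exactly the trichotomy between no double curves, double curves without triple points, and the presence of triple points; this is the organizing principle for the three cases and is detected geometrically by the dual complex $\Gamma$ (vertices for the $V_i$, edges for the $D_{ij}$, triangles for the triple points).

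I would then analyze the two nontrivial cases. When $N^2 = 0 \neq N$ there are no triple points, so the triple point formula and adjunction on each component force every double curve to be a smooth curve of arithmetic genus one, i.e. an elliptic curve, and the dual complex is a chain; the anticanonical structure then shows the interior components are elliptic ruled surfaces and the two ends are rational, giving case (2). When $N^2 \neq 0$ the anticanonical cycle $\sum_j D_{ij}$ on each component is a cycle of rational curves, so each $V_i$ is rational and every double curve is rational, and the dual complex $\Gamma$ becomes a closed triangulated surface, giving case (3). The final step is to identify $|\Gamma|$ with the sphere: here I would invoke the Clemens--Schmid exact sequence and the weight spectral sequence of the semistable degeneration to compute the graded pieces of the limit mixed Hodge structure. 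The condition that the smooth fiber is K3 forces $\mathrm{Gr}^W_0 H^2_{\lim}$, which is computed by $H^2(|\Gamma|)$, to be one-dimensional, with $N^2$ an isomorphism onto it; together with the constraint $\chi(X_0) = 24$ and simple-connectivity this pins $|\Gamma|$ down to $S^2$.

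The main obstacle is precisely this last, Hodge-theoretic step: transferring the purely combinatorial fact that $\Gamma$ is a closed surface into the sharp statement that it is the sphere, and simultaneously ruling out components of the wrong Kodaira type, requires the full machinery of the limit mixed Hodge structure and the Clemens--Schmid sequence, rather than the elementary adjunction and triple-point bookkeeping used merely to set up the three cases. A secondary technical point worth flagging is that the reduction to the hypothesis $\omega_X \cong \mathcal{O}_X$ (Kulikov's result that an arbitrary semistable degeneration may be modified to this ``good'' form without changing the general fiber) is taken as given here, so the argument above legitimately begins from that normalized situation.
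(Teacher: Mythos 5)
The paper does not prove this statement at all: Theorem~\ref{T:semistableK3} is quoted directly from the literature (\cite{Kulikov,Persson,FM}) and used as a black box in the proof of Proposition~\ref{P:shortTopK3Degenerations}, so there is no internal argument to compare yours against. Measured against the actual Kulikov--Persson--Pinkham--Friedman--Morrison proof, your sketch is a faithful outline of the standard strategy: triviality of $\omega_X$ and of $\mathcal{O}_X(X_0)$ plus adjunction gives $\omega_{V_i} \cong \mathcal{O}_{V_i}(-\sum_j D_{ij})$, the triple point formula and the monodromy trichotomy $N=0$, $N \neq 0 = N^2$, $N^2 \neq 0$ organize the three cases, and Clemens--Schmid identifies the topology of the dual complex. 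That is the right skeleton.

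Three places where your compression hides real work are worth flagging. First, you never invoke the K\"ahler hypothesis, but it is essential and is precisely why the theorem as stated carries that assumption: the step ``$-K_{V_i}$ effective and nonzero implies $V_i$ rational or ruled'' uses the Enriques--Kodaira classification, and without K\"ahlerness one cannot exclude class VII components; closing this gap in Kulikov's original argument is a substantial part of the later literature. Second, in case (2) the triple point formula alone does not force the dual complex to be a chain rather than a cycle of elliptic ruled surfaces; ruling out the cycle requires $H^1(|\Gamma|)=0$, which comes from $b_1$ of a K3 being zero via the same Clemens--Schmid machinery you defer to case (3). Third, in case (3) the Hodge-theoretic computation of $H^1(|\Gamma|)$ and $H^2(|\Gamma|)$ only pins down $S^2$ after one knows $|\Gamma|$ is a closed $2$-manifold, i.e.\ that the anticanonical cycles on each component close up and each double curve lies on exactly two triple points; that combinatorial step needs its own argument from the triple point formula. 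These are gaps of compression rather than of conception, so as a blind reconstruction of the cited theorem's proof the proposal is sound but incomplete at exactly the points where the hypotheses of the statement do their work.
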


Semistable degenerations of K3 surfaces determined by tops satisfy the hypotheses of Theorem~\ref{T:semistableK3}.  Combining the combinatorial data encoded by a short top with the results of Theorem~\ref{T:semistableK3} yields the following classification:

\begin{proposition}\label{P:shortTopK3Degenerations}

Let $\Diamond$ be a four-dimensional short top, and let $\mathcal{T}$ be a triangulation of the boundary of $\Diamond$ induced by a maximal simplicial fan $\Sigma$.  Then $\Diamond$ determines a semistable degeneration of K3 surfaces that falls into one of the following cases.

\begin{enumerate}
\item If the summit of $\Diamond$ consists of a single lattice point, then $X_0$ is a smooth K3 surface.
\item If the summit is an edge of $\Diamond$, then $X_0$ is a chain of elliptic ruled components with rational surfaces at
each end.  Each component corresponds to a lattice point in the summit; in particular, the two vertices of the summit correspond to the two rational surfaces at the ends of the chain.

\item \begin{enumerate}
\item If the summit is a two-face $F$ of $\Diamond$, then $X_0$ consists of rational surfaces meeting along rational curves.  The lattice points on the relative boundary of $F$ correspond to a single component of $X_0$.  Each lattice point in the relative interior of $F$ corresponds to 2 components of $X_0$. The dual graph $\Gamma$ of $X_0$ has the sphere as topological support. There is one edge $\Gamma$ for each edge in $\mathcal{T}$ connecting points on the relative boundary of $F$, and there are two edges in $\Gamma$ for each edge in the triangulation of $F$ induced by $\mathcal{T}$ which has an endpoint in the relative interior of $F$.  
\item If the summit is a three-dimensional lattice polytope $P$, then $X_0$ consists of rational surfaces meeting along rational curves.  The vertices of the dual graph $\Gamma$ of $X_0$ are in one-to-one correspondence with the lattice points of the boundary of $P$, and the edges of $\Gamma$ are given by the triangulation of the boundary of $P$ induced by $\mathcal{T}$.
\end{enumerate}

\end{enumerate}
\end{proposition}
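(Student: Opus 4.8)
The plan is to combine two ingredients: the toric-geometric description of the central fiber $X_0 = \pi^{-1}(0)$ developed in the paragraphs preceding the proposition, and the Kulikov--Persson--Friedman--Morrison trichotomy of Theorem~\ref{T:semistableK3}. By Theorem~\ref{T:degeneration}, a four-dimensional short top yields a semistable degeneration of K3 surfaces, and (as noted just before the statement) this degeneration satisfies the hypotheses of Theorem~\ref{T:semistableK3}; hence $X_0$ must be of Type I, II, or III. The entire argument then reduces to reading off, from the combinatorics of the summit and the triangulation $\mathcal{T}$, enough of the incidence structure of $X_0$ to (i) decide which Kulikov type occurs and (ii) match the explicit dual graph $\Gamma$ claimed in each case.

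The first step is to set up a precise dictionary between summit lattice points and components of $X_0$. I would record the two facts already isolated in the text: a summit lattice point contributes a toric divisor meeting $X_\Sigma$, and such a divisor \emph{splits} into exactly two components precisely when the point lies in the relative interior of a $2$-face $\theta$ of $\Diamond$ whose dual edge $\theta^\circ$ carries an interior lattice point. Because $\Diamond$ is a short top, every face meeting the summit is dual to a face through $w=(0,\dots,0,-1)$, so splitting is controlled entirely by the position of $w$ in $\Diamond^\circ$: when $w$ is interior to an edge (summit a $2$-face) the interior points of the summit split, while when $w$ is a vertex (summit a facet) nothing splits. I would also record the intersection rule, namely that two components meet iff the corresponding lattice points are joined by an edge of $\mathcal{T}$, that triple intersections correspond to $2$-simplices of $\mathcal{T}$, and that the number of intersection curves between two components is again governed by whether the relevant dual faces carry interior points.

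With this dictionary the case analysis becomes largely routine. If the summit is a single point, $X_0$ is irreducible, so it is neither Type II nor III and must be the smooth K3 of Type I. If the summit is an edge, no point splits and $\mathcal{T}$ restricts to a path, so $\Gamma$ is a chain with no $2$-cells; this excludes Types I and III and forces Type II, whose classification supplies the elliptic ruled interior components and rational end components, with the two chain ends matching the two vertices of the edge. If the summit is a $2$-face $F$ or a three-dimensional facet $P$, the summit triangulation contains $2$-simplices, producing triple points and hence a genuinely two-dimensional $\Gamma$; this rules out Types I and II and forces Type III. For the facet case, $w$ is a vertex of $\Diamond^\circ$ so nothing splits, interior lattice points of $P$ contribute no component, and $\Gamma$ is exactly the triangulation of $\partial P \cong S^2$ induced by $\mathcal{T}$. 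For the $2$-face case, interior points of $F$ split while boundary points do not, and I would show that the resulting graph is the double of the triangulated disk $F$ along $\partial F$, which is homeomorphic to $S^2$, with boundary edges contributing one edge of $\Gamma$ and edges having an interior endpoint contributing two, exactly as asserted.

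The main obstacle is the fine intersection theory underlying the $2$-face case: proving that a split divisor meets each neighbor in the doubled pattern, so that an interior triangulation edge yields two edges of $\Gamma$ while a boundary edge yields one, and then verifying that the combinatorial graph so produced is indeed a triangulation of $S^2$ (the double of the disk) consistent with the Type III conclusion. This step requires tracking the components of the strata $D_i \cap D_j \cap X_\Sigma$ through the interior lattice points of the relevant dual faces, rather than merely counting components of single divisors. The remaining identifications---that the end components in the chain case are rational and the interior ones elliptic ruled, and that all components in the Type III cases are rational meeting along rational curves---follow from the Kulikov classification once the type has been pinned down, so the genuine content lies in the incidence and multiplicity bookkeeping rather than in the surface geometry itself.
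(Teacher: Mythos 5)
Your proposal follows essentially the same route as the paper's proof: the splitting dictionary for summit lattice points combined with the Kulikov--Persson--Friedman classification (Theorem~\ref{T:semistableK3}), with the type pinned down by whether $X_0$ is a single component, a chain, or neither, and the dual graph $\Gamma$ read off from $\mathcal{T}$. The one step you flag as the main obstacle---the intersection bookkeeping for split divisors in the two-face case---is exactly the point the paper disposes of by citing Rohsiepe's analysis, so your outline matches the published argument in both structure and substance.
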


\begin{proof}
If the summit of $\Diamond$ consists of a single lattice point, then $X_0$ has a single component, and we are in Case 1 of Theorem~\ref{T:semistableK3}.

If the summit is an edge of $\Diamond$, then each lattice point in the summit corresponds to a single component of $X_0$, because the toric divisors described by the lattice points do not split.  Divisors have nontrivial intersection if and only if the corresponding lattice points are connected by an edge in $\mathcal{T}$.  It follows that $X_0$ is a chain of surfaces, so we are in Case 2 of Theorem~\ref{T:semistableK3}.

If the summit is a two-face $F$ of $\Diamond$, then the point $w = (0,\dots,0,-1)$ lies in the interior of an edge of $\Diamond^\circ$.  In this case, toric divisors corresponding to lattice points in the relative interior of $F$ will split into 2 components, thereby yielding 2 components of $X_0$.  The toric divisors corresponding to lattice points on the relative boundary of $F$ will not split.  An edge in $\mathcal{T}$ that connects two lattice points on the relative boundary of $F$ yields an edge in the dual graph $\Gamma$ of $X_0$, because the corresponding pairs of divisors have non-trivial intersection.  We may analyze intersections for split divisors following the argument in \cite{Rohsiepe}.  We find that an edge in $\mathcal{T}$ between a lattice point on the boundary of $F$ and an interior lattice point of $F$ will yield 2 edges in $\Gamma$, one for each component of $X_0$ obtained from the interior lattice point.  An edge in $\mathcal{T}$ connecting 2 interior lattice points of $F$ will also yield 2 edges in $\Gamma$, one connecting the first component obtained from each lattice point and one connecting the second component obtained from each lattice point.  Because $X_0$ consists of neither a single component nor a chain of surfaces, we are in Case 3 of Theorem~\ref{T:semistableK3}.

If the summit is a three-dimensional lattice polytope $P$, then splitting cannot occur.  Every lattice point on the relative boundary of $P$ will yield a component of $X_0$; the lattice points in the relative interior of $P$ correspond to toric divisors that do not intersect $X_0$.  The edges of $\Gamma$ are given by the triangulation of the boundary of $P$ induced by $\mathcal{T}$.  Because $X_0$ consists of neither a single component nor a chain of surfaces, we are in Case 3 of Theorem~\ref{T:semistableK3}.

\end{proof}

Given a fixed three-dimensional reflexive boundary polytope $\Delta$, we can always construct a short top falling into Case 1 of Proposition~\ref{P:shortTopK3Degenerations} by adding a single summit point at $(0,0,0,1)$, and we can always construct short tops falling into Case 3(b) by choosing a triangulation of the facets of the dual reflexive boundary polytope $\Delta^\circ$ and applying Algorithm~\ref{A:shortTopClass}.  Whether Cases 2 and 3(a) arise depends on the combinatorial structure of $\Delta^\circ$.

\begin{lemma}\label{L:case2}
A three-dimensional reflexive polytope $\Delta$ can be the boundary of a short top $\Diamond$ falling into Case 2 of Proposition~\ref{P:shortTopK3Degenerations} if and only if the polar dual $\Delta^\circ$ is isomorphic to a pair of three-dimensional tops glued along a common two-dimensional reflexive boundary polytope.
\end{lemma}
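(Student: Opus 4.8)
The plan is to pass to the dual polyhedron $\Diamond^\circ$ and read a slicing of $\Delta^\circ$ directly off the summit edge. Recall from the case analysis preceding Proposition~\ref{P:shortTopK3Degenerations} that Case 2 is equivalent to the statement that $w=(0,\dots,0,-1)$ lies in the relative interior of a two-face $G$ of $\Diamond^\circ$. Since $\Diamond$ is a short top, every summit vertex sits at $x_4=1$ and, by the argument of Lemma~\ref{L:allFacetsHavew}, corresponds to a \emph{bounded} facet of $\Diamond^\circ$ containing $w$, while the vertices of $\Delta$ at $x_4=0$ give the vertical unbounded facets, none of which contains $w$ (as $w\cdot u=0\neq-1$). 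When the summit is an edge $E=\overline{v_1v_2}$, there are exactly two bounded facets $F_1,F_2$, dual to $v_1,v_2$, with $G=F_1\cap F_2$. Writing $v_1-v_2=(r_1,r_2,r_3,0)$ and $r=(r_1,r_2,r_3)\in N$, I would take the slicing hyperplane of $\Delta^\circ$ to be $\tilde H=\{(y_1,y_2,y_3): r_1y_1+r_2y_2+r_3y_3=0\}$; note $r\neq 0$ since $v_1\neq v_2$, and $0$ is interior to $\Delta^\circ$, so $\tilde H$ cuts $\Delta^\circ$ nondegenerately.

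For the forward direction I would first identify $\pi(G)=\Delta^\circ\cap\tilde H=:\nabla$, where $\pi$ denotes the vertical projection $(y_1,y_2,y_3,y_4)\mapsto(y_1,y_2,y_3)$. Because $\Diamond^\circ$ is convex and extends to $+\infty$ in $y_4$, it is exactly the region above a convex piecewise-linear graph $g$ over $\Delta^\circ$; having precisely the two bottom facets $F_1,F_2$, this graph has two linear pieces, and the crease of a convex two-piece graph is a hyperplane section, namely $\tilde H$, giving the claimed equality. I would then note that $\pi$ restricts to an isomorphism of affine lattices from the plane of $G$ onto $\tilde H\cap M$: writing $v_1=(p_1,p_2,p_3,1)$, the inverse recovers $y_4=-1-(p_1y_1+p_2y_2+p_3y_3)$ integrally, so $\nabla\cong G$ as lattice polytopes. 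The crux is then to show $\nabla$ is reflexive, equivalently that every edge of $G$ lies at lattice distance $1$ from $w$. Each such edge lies on a vertical facet $\{\vec y\cdot u=-1\}$ for a vertex $u$ of $\Delta$, and $u$ is primitive because it is a vertex of the reflexive polytope $\Delta$. Since $w\cdot u=0$ and $w$ is a lattice point of the plane of $G$, the functional $\vec y\mapsto \vec y\cdot u$ takes values in $d\mathbb{Z}$ on that affine lattice, where $d$ is the index of its restriction; but the edge sits at the value $-1$ and carries lattice vertices of $\Diamond^\circ$, forcing $-1\in d\mathbb{Z}$ and hence $d=1$. Thus $\nabla$ is reflexive, and each half $\Delta^\circ\cap\tilde H^{\pm}$ is a three-dimensional top with reflexive boundary $\nabla$, its remaining facets inheriting the form $\cdot u=-1$ from $\Delta^\circ$.

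For the converse I would place the gluing hyperplane at $\{y_3=0\}$, so $\nabla=\Delta^\circ\cap\{y_3=0\}$ is reflexive and $T^{\pm}=\Delta^\circ\cap\{\pm y_3\geq 0\}$ are the two tops, then build $\Diamond^\circ$ by lifting. I would send $\nabla$ to height $y_4=-1$ and lift $T^+,T^-$ to the two bottom facets $F_1=\{y_4=-1\}$ and $F_2=\{y_3+y_4=-1\}$, dual respectively to the lattice points $v_1=(0,0,0,1)$ and $v_2=(0,0,1,1)$, retaining the vertical facets over $\partial\Delta^\circ$. This projects onto $\Delta^\circ$, so the reflexive boundary of $\Diamond$ is $\Delta$; its two bounded facets are dual to the lattice points $v_1,v_2$; and convexity follows from local convexity at the single crease by \cite[Theorem 6]{Mehlhorn}, since the slopes $0$ and $-1$ increase across $\{y_3=0\}$, giving an upward bend and exactly two bottom facets. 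By construction $w=(0,0,0,-1)$ lies in the relative interior of $G=F_1\cap F_2=\nabla\times\{-1\}$, so the summit of $\Diamond$ is exactly the edge $\overline{v_1v_2}$, placing $\Diamond$ in Case 2.

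The main obstacle is the reflexivity of the slice in the forward direction. The key observation is that the supporting line of each edge of $G$ must already contain lattice points of $\Diamond^\circ$, which forces the lattice distance from $w$ to be exactly $1$; this is precisely where the hypothesis that $w$ lies in the interior of a \emph{two-face} (equivalently, that the summit is an edge rather than a higher-dimensional face) is used. A secondary point to handle carefully in the converse is verifying that the lift is genuinely convex with exactly two bottom facets meeting over $\nabla$, rather than producing spurious facets or a downward crease; this is why the two slopes must be ordered to bend upward across the gluing hyperplane.
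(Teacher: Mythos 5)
Your proof is correct and follows essentially the same route as the paper's: both directions pass to the dual polyhedron $\Diamond^\circ$, identify the vertical projection of the two-face containing $w=(0,\dots,0,-1)$ with a hyperplane slice of $\Delta^\circ$ through the origin, and conversely lift the two-top decomposition of $\Delta^\circ$ to a convex dual top. The only difference is level of detail: you actually prove the slice is reflexive (via the lattice-index argument on the edges of $G$) and give the explicit convex lift $y_4 \geq \max(-1,\,-1-y_3)$, whereas the paper asserts the reflexivity and describes the lift as a height assignment ($-1$ on one piece, $\geq 0$ on the rest) consistent with convexity.
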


\begin{proof}
The short top $\Diamond$ falls into Case 2 of Proposition~\ref{P:shortTopK3Degenerations} if the point $w = (0, \dots,0,-1)$ lies in the interior of a two-face $F$ of $\Diamond^\circ$.  The vertical projection of $F$ onto $\Delta^\circ$ is the intersection of a hyperplane $H$ containing the origin with $\Delta^\circ$.  The lattice polygon $H \cap \Delta^\circ$ is a two-dimensional reflexive polygon which divides $\Delta^\circ$ into two pieces $A$ and $B$, each isomorphic to a top.  Conversely, given such a subdivision of $\Delta^\circ$, we can construct a dual short top $\Diamond^\circ$ corresponding to Case 2 by choosing a lattice point triangulation of $\Delta^\circ$ consistent with the subdivision, assigning a minimum $x_k$ value of $-1$ to all of the lattice points in $A$, and choosing $x_k$ values for points in $B$ but not $A$ that are at least $0$ and are consistent with convexity.
\end{proof}

Ten of the 4,319 classes of three-dimensional reflexive polytopes do not satisfy the condition given in Lemma~\ref{L:case2}.  In the database given in Sage, the dual polytopes $\Delta^\circ$ have indices 0 (the standard simplex), 2, 5, 7, 16, 26, 31, 37, 40, and 53. \cite{Sage}  \textcolor{black}{These polytopes cannot be used to construct degenerations of K3 surfaces where $X_0$ is a chain of elliptic ruled components.}    

\begin{lemma}\label{L:case3aIFF}
A three-dimensional reflexive polytope $\Delta$ can be the boundary of a short top $\Diamond$ falling into Case 3(a) of Proposition~\ref{P:shortTopK3Degenerations} if and only if the polar dual $\Delta^\circ$ admits a polytopal decomposition such that the vertices of each polytope in the decomposition are lattice points and each three-dimensional polytope $\delta$ in the decomposition contains a fixed edge $e$ with the origin in its interior.
\end{lemma}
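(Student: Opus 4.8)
The plan is to prove both implications by passing between the short top $\Diamond$ and its polar dual $\Diamond^\circ$, and exploiting the vertical projection $\pi \colon (y_1,y_2,y_3,y_4) \mapsto (y_1,y_2,y_3)$, which sends $\Diamond^\circ$ onto $\Delta^\circ$. Recall from the discussion preceding Proposition~\ref{P:shortTopK3Degenerations} that $\Diamond$ falls into Case 3(a) precisely when $w = (0,0,0,-1)$ lies in the relative interior of an edge $e'$ of $\Diamond^\circ$. Since $\Diamond^\circ$ is an unbounded lattice polyhedron extending in the $+y_4$ direction, its lower boundary is the graph of a convex piecewise-linear function over $\Delta^\circ$, and the bounded facets of $\Diamond^\circ$ are exactly the maximal linear pieces; their images under $\pi$ therefore form a polytopal decomposition of $\Delta^\circ$ into three-dimensional lattice polytopes, since the vertices of the lattice polyhedron $\Diamond^\circ$ project to lattice points of $\Delta^\circ$.

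For the forward direction, suppose $\Diamond$ is in Case 3(a), so $w$ lies in the relative interior of an edge $e'$. Because $e'$ is then the minimal face of $\Diamond^\circ$ containing $w$, every face of $\Diamond^\circ$ that contains $w$ must contain $e'$. By Lemma~\ref{L:allFacetsHavew} every bounded facet of $\Diamond^\circ$ contains $w$, hence contains $e'$. Setting $e = \pi(e')$, the point $w$ projects to the origin and lies in the relative interior of $e'$, so $e$ is a segment with lattice endpoints whose relative interior contains the origin, and $e$ is an edge of each projected cell $\delta$. Thus the decomposition obtained by projecting the bounded facets has exactly the stated properties: every three-dimensional cell $\delta$ contains the fixed edge $e$, and the origin lies in the interior of $e$.

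For the converse, I would argue in the spirit of the converse in Lemma~\ref{L:case2}. Given a decomposition $\{\delta_i\}$ of $\Delta^\circ$ into lattice polytopes each having $e = [p,q]$ as an edge with the origin in its relative interior, I construct a dual short top realizing Case 3(a) by lifting. Choose a lattice-point triangulation $\mathcal{T}$ of $\Delta^\circ$ refining the decomposition and retaining $e$ as an edge, then assign minimum $y_4$-values to the lattice points of $\Delta^\circ$ via Algorithm~\ref{A:shortTopClass}, subject to the constraint that the lifts of $p$, the origin, and $q$ are collinear (so that $e$ lifts to a single segment $e'$ through $w$) and that all remaining lattice points are lifted strictly higher, consistent with the local convexity inequalities produced by the algorithm. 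Concretely, one can take a height function that is affine in the direction of $e$ and, transverse to $e$, is a strictly convex piecewise-linear \emph{cone} function with its minimum along the image of $e$; this makes the lifted polyhedron convex, places $e'$ at the bottom as a one-dimensional face, and keeps $w$ in its relative interior. The divisibility conditions of Algorithm~\ref{A:shortTopClass} then guarantee that the resulting facets are polar dual to lattice points, so the construction yields a genuine dual short top with $w$ interior to the edge $e'$, that is, a short top in Case 3(a).

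The main obstacle is the converse, and specifically the simultaneous satisfaction of three requirements by a single integral height assignment: the lift must (i) be convex, so that it defines an honest polyhedron, which is governed by the local-convexity inequalities of Algorithm~\ref{A:shortTopClass}; (ii) keep the segment $e$ flat, so that it survives as a one-dimensional face $e'$ with $w$ in its relative interior, rather than collapsing $w$ to a vertex or promoting $e'$ to a higher-dimensional face; and (iii) respect the lattice-duality divisibility conditions, so that each facet corresponds to an actual lattice point of $N$. The cone-function construction resolves (i) and (ii) geometrically: because every cell shares the edge $e$, the decomposition projects along $e$ to a fan of plane polygons around the origin, which is manifestly liftable. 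The integrality in (iii) is then handled by the freedom in choosing integer heights, exactly as in the theorem guaranteeing infinitely many valid parameter choices for Algorithm~\ref{A:shortTopClass}.
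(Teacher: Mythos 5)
Your argument is correct and follows the paper's proof in both directions: forward, you project the bounded facets of $\Diamond^\circ$, all of which contain the edge through $w$ by Lemma~\ref{L:allFacetsHavew}; conversely, you lift the decomposition back to a dual short top with $e$ kept flat at height $-1$ and every other lattice point strictly higher. Your converse is in fact more explicit than the paper's (which only says to choose heights ``consistent with convexity''), and the transverse-fan/cone-function construction is the right way to realize that lift --- note, though, that it is this construction and not the detour through a refining triangulation and Algorithm~\ref{A:shortTopClass} that carries the proof, since that algorithm lifts boundary triangulations coned over the origin (a strict lift of a refining triangulation whose simplices miss the origin could never be a dual short top, by Lemma~\ref{L:allFacetsHavew}), whereas your cone function is affine on the decomposition's cells, so the triangulation plays no role.
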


\begin{proof}
The short top $\Diamond$ falls into Case 3(a) of Proposition~\ref{P:shortTopK3Degenerations} if the point $w = (0, \dots,0,-1)$ lies in the interior of an edge $E$ of $\Diamond^\circ$.  Every bounded facet of $\Diamond^\circ$ contains $E$ by Lemma~\ref{L:allFacetsHavew}.  The vertical projection of the bounded facets yields the desired polytopal decomposition.  Conversely, given such a decomposition of $\Delta^\circ$, we can construct a dual short top $\Diamond^\circ$ corresponding to Case 3(a) by assigning a minimum $x_k$ value of $-1$ to the lattice points in $e$, and choosing $x_k$ values for the other lattice points that lift the polytopal decomposition to facets and are consistent with convexity.
\end{proof}

Enumerating the polytopal decompositions of a given lattice polytope is a computationally complex task.  \textcolor{black}{We may} describe necessary and sufficient conditions for Case 3(a) which are less computationally intensive to check \textcolor{black}{by focusing on triangulations rather than arbitrary polytopal decompositions}.

\begin{lemma}\label{L:case3a}
If a three-dimensional reflexive polytope $\Delta$ can be the boundary of a short top $\Diamond$ falling into Case 3(a) of Proposition~\ref{P:shortTopK3Degenerations}, then the origin lies in the interior of a line segment $e$ between two lattice points of $\Delta^\circ$.  If $\Delta^\circ$ admits a lattice point triangulation such that every three-dimensional simplex in the triangulation contains a line segment $e$ between two lattice points on the boundary of $\Delta^\circ$ with the origin as an interior point, then we can construct a short top $\Diamond$ falling into Case 3(a) of Proposition~\ref{P:shortTopK3Degenerations} with $\Delta$ as reflexive boundary.
\end{lemma}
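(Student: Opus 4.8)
The statement packages two separate (and non-converse) implications: a necessary condition extracted from the geometry of Case 3(a), and a sufficient condition phrased in terms of a triangulation. The plan is to obtain both by building on the polytopal-decomposition criterion of Lemma~\ref{L:case3aIFF}, using the fact that a triangulation is merely a special polytopal decomposition whose pieces are simplices.

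For the necessary direction, recall that membership in Case 3(a) of Proposition~\ref{P:shortTopK3Degenerations} means precisely that $w=(0,\dots,0,-1)$ lies in the relative interior of an edge $E$ of $\Diamond^\circ$, namely the face dual to the $2$-dimensional summit. I would apply the vertical projection $\pi(y_1,\dots,y_k)=(y_1,\dots,y_{k-1})$, under which $\Diamond^\circ$ maps onto $\Delta^\circ$ and lattice points map to lattice points. The two endpoints of $E$ are vertices of $\Diamond^\circ$, hence lattice points, so their images are lattice points of $\Delta^\circ$, and $\pi(w)$ is the origin. The one thing to check is that $E$ is not vertical, so that these two images are distinct and the origin lands in the relative interior of the resulting segment $e$: this holds because a vertical edge through $w$ would force $w$ to be an endpoint (an extreme point) rather than an interior point, contradicting Case 3(a). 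The same conclusion also drops out of Lemma~\ref{L:case3aIFF}, since its common edge $e$ already has the origin in its interior and lattice endpoints.

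For the sufficient direction, suppose $\Delta^\circ$ has a lattice-point triangulation $\mathcal{T}$ in which every tetrahedron contains the fixed segment $e=[\bar p,\bar q]$, with $\bar p,\bar q$ boundary lattice points and the origin interior to $e$. First I would argue that $e$ is in fact a common edge of all the tetrahedra: the tetrahedra have disjoint interiors and each contains $e$, so $e$ lies in their common intersection, forcing it to be a shared face; since the origin is interior to $e$ and an extreme point cannot lie in the interior of a segment contained in a convex body, the origin is \emph{not} a vertex of $\mathcal{T}$, and each tetrahedron has $\bar p$ and $\bar q$ as two of its vertices with the origin an interior lattice point of the edge $e$. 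This exhibits $\mathcal{T}$ as a lattice polytopal decomposition of $\Delta^\circ$ of exactly the type required by Lemma~\ref{L:case3aIFF}: each piece is a lattice polytope containing the fixed edge $e$ with the origin in its interior. Invoking the converse direction of Lemma~\ref{L:case3aIFF} then produces a short top $\Diamond$ falling into Case 3(a) with $\Delta$ as reflexive boundary.

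The main obstacle is hidden inside that final invocation: constructing the dual top requires lifting the decomposition to a convex polyhedron $\Diamond^\circ$ whose bounded facets project to the tetrahedra of $\mathcal{T}$, with $e$ lifted to an edge through $w$ and with every facet dual to a genuine lattice point of $N$. Concretely I would set the heights of $\bar p$, $\bar q$, and the origin equal to $-1$ and then choose $y_k$-heights for the remaining lattice points so that the lower envelope is convex (checking local convexity at each codimension-two face as in Algorithm~\ref{A:shortTopClass}) while the divisibility conditions of that algorithm guarantee integrality of the dual facets. The delicate point is the existence of such a convex, integral lift: one must exploit the ``book-binding'' structure in which all tetrahedra share the spine $e$, building a piecewise-linear height function that is constant along $e$ and strictly convex in the transverse directions, and then appealing to the preceding existence theorem to select integer heights meeting both the convexity inequalities and the divisibility constraints. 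If the given triangulation happens not to be regular, one does not lift it verbatim; instead its existence certifies that $e$ meets every tetrahedron, which is exactly the combinatorial input needed to produce some regular edge-star decomposition with the same spine, and it is this regular decomposition that is lifted.
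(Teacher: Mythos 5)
Your proposal is correct in substance and, at bottom, performs the same construction as the paper, but it routes the sufficiency direction through a different key result. For necessity you argue exactly as the paper does: project the edge of $\Diamond^\circ$ whose interior contains $w=(0,\dots,0,-1)$ vertically onto $\Delta^\circ$; your extra check that this edge cannot be vertical (for a short top the vertical line through $w$ meets $\Diamond^\circ$ only in the ray above $w$, since the summit contains points at height $1$) is a detail the paper leaves implicit. For sufficiency the paper lifts the given triangulation directly ``by applying Algorithm~\ref{A:shortTopClass}'', so that the tetrahedra become the bounded facets of $\Diamond^\circ$, whereas you observe that a lattice triangulation is a special polytopal decomposition and invoke the converse half of Lemma~\ref{L:case3aIFF}. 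Since the paper's proof of Lemma~\ref{L:case3aIFF} is itself exactly this height-assignment lift, the two routes coincide at the technical core; what your version buys is modularity (the logical dependence of this lemma on the previous one, which the paper only gestures at in prose, is made explicit) and an honest accounting of the two obligations both proofs defer: integrality of the dual points (the divisibility conditions) and convexity/regularity of the lift.

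Two cautions. First, your intermediate claim that $e$ must be a common \emph{edge} of all the tetrahedra does not follow from your argument: containment of $e$ in the common intersection forces $e$ to lie \emph{inside} a shared face, not to equal one, and indeed with only one or two tetrahedra $e$ can sit in the interior of a cell or of the shared two-face. (With at least three cells the claim is true, but the proof needs reflexivity: the minimal face of each cell containing the origin is then one-dimensional, and if it were strictly shorter than $e$ its endpoints would be interior lattice points of $\Delta^\circ$ other than the origin.) This slip does not damage your reduction, which uses only containment of the segment $e$; but it hides a real issue that afflicts your proof and the paper's equally: in those degenerate cases the verbatim lift of the given complex places $w$ in the relative interior of a facet or a two-face of $\Diamond^\circ$, i.e.\ it produces Case 1 or Case 2 of Proposition~\ref{P:shortTopK3Degenerations} rather than Case 3(a), and one must first refine the decomposition (for instance by cutting the shared two-face along $e$) to recover the book structure around the spine before lifting. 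Second, your closing assertion that a non-regular triangulation can always be traded for a regular decomposition with the same spine is exactly the right thing to want, and the paper silently assumes it, but as written you assert rather than prove it.
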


\begin{proof}
A short top $\Diamond$ falls into Case 3(a) of Proposition~\ref{P:shortTopK3Degenerations} if the point $w = (0, \dots,0,-1)$ lies in the interior of an edge of $\Diamond^\circ$.  The vertical projection of this edge onto $\Delta^\circ$ is a line segment between two lattice points on the boundary of $\Delta^\circ$.  On the other hand, given a lattice point triangulation such that every three-dimensional simplex in the triangulation contains a line segment $e$ between two lattice points on the boundary of $\Delta^\circ$ with the origin as an interior point, we can construct a dual short top $\Diamond^\circ$ where the finite facets of the dual top are in one-to-one correspondence with the three-dimensional simplices in our triangulation by applying Algorithm~\ref{A:shortTopClass}.  Such a dual top will correspond to Case 3(a).
\end{proof}

Thirteen of the 4,319 classes of three-dimensional reflexive polytopes do not satisfy the necessary condition of Lemma~\ref{L:case3a}.  In the database given in Sage, the dual polytopes $\Delta^\circ$ where the origin does not lie in the interior of a line segment between two lattice points have indices 0, 1, 3, 6, 13, 22, 33, 54, 68, 87, 90, 98, and 118. \cite{Sage}

As an example of the application of Proposition~\ref{P:shortTopK3Degenerations}, we classify all short tops where the dual reflexive boundary $\Delta^\circ$ is the octahedron with vertices $(\pm 1,0,0)$, $(0,\pm 1,0)$, and $(0,0,\pm 1)$.

\begin{example}
Let $\Delta$ be the cube with vertices of the form $(\pm 1,\pm 1,\pm 1)$.  Then any short top with reflexive boundary $\Delta$ is isomorphic to a short top $\Diamond$ which falls into one of the following cases.
\begin{enumerate}
\item The summit of $\Diamond$ consists of the point $(0,0,0,1)$.
\item The summit is an edge with vertices $(0,0,0,1)$ and $(0,0,c+1,1)$, where $c$ is an integer and $c>-1$.
\item \begin{enumerate}
\item The summit is a quadrilateral with vertices $(0,0,0,1)$, $(0,b+1,0,1)$, $(0,0,c+1,1)$, and $(0,b+1,c+1,1)$, where the integer parameters $b$ and $c$ satisfy $b>-1$, $c>-1$.
\item The summit is a three-dimensional rectangular parallelepiped with vertices $(0,0,0,1)$, $(a+1,0,0,1)$, $(0,b+1,0,1)$, $(0,0,c+1,1)$, $(a+1,b+1,0,1)$, $(a+1,0,c+1,1)$, $(0,b+1,c+1,1)$, and $(a+1,b+1,c+1,1)$, where the integer parameters $b$ and $c$ satisfy $a>-1$, $b>-1$, and $c>-1$.
\end{enumerate}
\end{enumerate}
\end{example}

\begin{proof}
If the dual top $\Diamond^\circ$ has a single finite facet at $x_4=-1$, we obtain a summit of $\Diamond$ with the single point $(0,0,0,1)$.  Because $\Delta^\circ$ is a smooth Fano polytope, there is a unique lattice point triangulation of the facets of $\Delta^\circ$, so we may obtain all smooth tops falling into Case 3(b) of Proposition~\ref{P:shortTopK3Degenerations} (up to isomorphism) by applying Algorithm~\ref{A:shortTopClass}.

We may divide $\Delta^\circ$ into two pieces isomorphic to tops by splitting it into the points satisfying $x_3 \geq 0$ and the points satisfying $x_3 \leq 0$.  Up to isomorphism, this is the unique such division.  We apply Lemma~\ref{L:case2} to obtain a family of smooth tops satisfying Case 2 of Proposition~\ref{P:shortTopK3Degenerations}.

There is a lattice point triangulation of $\Delta^\circ$ that consists of four simplices with the edge between $(1,0,0)$ and $(-1,0,0)$ as common intersection.  Up to isomorphism, this is the unique lattice point triangulation of $\Delta^\circ$ where the origin is interior to an edge of every simplex.  We apply Lemma~\ref{L:case3a} to obtain a family of smooth tops satisfying Case 3(a) of Proposition~\ref{P:shortTopK3Degenerations}.
\end{proof}

\section{Semistable degenerations of Calabi-Yau threefolds}

When a five-dimensional short top defines a semistable degeneration of Calabi-Yau threefolds, we may use the summit of the short top to give a combinatorial description of the degeneration, in analogy to Proposition~\ref{P:shortTopK3Degenerations}.

\begin{proposition}\label{P:shortTopCY3Degenerations}

Let $\Diamond$ be a five-dimensional short top, and let $\mathcal{T}$ be a triangulation of the boundary of $\Diamond$ induced by a maximal simplicial fan $\Sigma$.  Suppose $X_\Sigma$ is smooth.
Then $\Diamond$ determines a semistable degeneration of Calabi-Yau threefolds that falls into one of the following cases.

\begin{enumerate}
\item If the summit of $\Diamond$ consists of a single lattice point, then $X_0$ is a smooth Calabi-Yau threefold.
\item If the summit is an edge of $\Diamond$ containing $\ell$ lattice points, then $X_0$ is a chain of $\ell$ components.
\item If the summit is a two-face $F$ of $\Diamond$, then the dual graph $\Gamma$ of $X_0$ has a disk as topological support.  The vertices of the dual graph $\Gamma$ of $X_0$ are in one-to-one correspondence with the lattice points of $F$, and the edges of $\Gamma$ are given by the triangulation of the boundary of $F$ induced by $\mathcal{T}$.
\item \begin{enumerate}
\item If the summit is a three-face $G$ of $\Diamond$, then the lattice points on the relative boundary of $G$ correspond to a single component of $X_0$.  Each lattice point in the relative interior of $G$ corresponds to 2 components of $X_0$. There is one edge in the dual graph $\Gamma$ of $X_0$ for each edge in $\mathcal{T}$ connecting points on the relative boundary of $F$, and there are two edges in $\Gamma$ for each edge in the triangulation of $F$ induced by $\mathcal{T}$ which has an endpoint in the relative interior of $F$.  
\item If the summit is a four-dimensional lattice polytope $P$, then the dual graph $\Gamma$ of $X_0$ has the three-sphere as topological support.  The vertices of $\Gamma$ are in one-to-one correspondence with the lattice points of the boundary of $P$, and the edges of $\Gamma$ are given by the triangulation of the boundary of $P$ induced by $\mathcal{T}$.
\end{enumerate}

\end{enumerate}
\end{proposition}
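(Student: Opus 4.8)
The plan is to mirror the proof of Proposition~\ref{P:shortTopK3Degenerations} one dimension higher, replacing the external appeal to Theorem~\ref{T:semistableK3} by a direct computation of the dual complex, since no analogous classification of semistable degenerations of Calabi--Yau threefolds is available. First I would apply Theorem~\ref{T:degeneration} with $\Sigma$ playing the role of $R$: since $k=5$ and $X_\Sigma$ is smooth, the induced map $X_\Sigma \to \mathbb{C}$ is a semistable degeneration of smooth Calabi--Yau threefolds, so $X_0$ is a reduced normal-crossings divisor all of whose components are smooth threefolds. As $X_\Sigma$ is a fourfold, at most four components meet at any point, so the dual complex $\Gamma$ (the generalization of the dual graph) has dimension at most three. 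Exactly as in the K3 setting, each component of $X_0$ arises from a lattice point of the summit (possibly with multiplicity two, as discussed below), and $r$ components meet along a $(4-r)$-dimensional subvariety precisely when the corresponding lattice points span an $(r-1)$-simplex of the restriction of $\mathcal{T}$ to the summit; this follows from the standard intersection theory of nondegenerate toric hypersurfaces (cf.~\cite{CoxKatz}). Thus, away from splitting, $\Gamma$ is combinatorially the triangulation of the summit face induced by $\mathcal{T}$.

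Next I would localize the splitting. Recall that a toric divisor splits exactly when its lattice point lies in the relative interior of a $(k-2)$-face $\theta$ of $\Diamond$ whose dual edge $\theta^\circ \subset \Diamond^\circ$ contains $w=(0,\dots,0,-1)$ in its interior, and that each such divisor then contributes two components. For $k=5$ these $\theta$ are the $3$-faces. Using polar duality exactly as in the four-dimensional case analysis, a $d$-dimensional summit is dual to the $(4-d)$-face of $\Diamond^\circ$ in whose relative interior $w$ lies; hence $w$ is interior to an edge --- the only configuration permitting splitting --- if and only if the summit is a $3$-face. This settles the component counts in every case at once: a single point (Case 1), the $\ell$ lattice points of the summit edge (Case 2), and every lattice point of the polygon $F$ (Case 3) each give one smooth component; in Case 4(a) the relative-boundary points of $G$ give one component while each relative-interior point splits into two; and in Case 4(b) the point $w$ is a vertex of $\Diamond^\circ$, so no divisor splits and, moreover, the lattice points interior to the facet $P$ give toric divisors disjoint from $X_\Sigma$, leaving only the boundary points of $P$ to contribute one component each.

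With the components identified, I would read off $\Gamma$ and its topological support in the non-splitting cases directly. Case 1 is a single smooth threefold. In Case 2 consecutive summit points are joined in $\mathcal{T}$ and nothing splits, so $\Gamma$ is a path on $\ell$ vertices, i.e.\ a chain. In Case 3 the restriction of $\mathcal{T}$ to the convex polygon $F$ is a triangulation of a disk, so $\Gamma$ is this triangulation and its support is the disk $F$. In Case 4(b) the interior points of $P$ drop out and $\Gamma$ is the triangulation of $\partial P$ induced by $\mathcal{T}$; since $P$ is a convex $4$-polytope, $\partial P$ is homeomorphic to the three-sphere $S^3$, which is the claimed support.

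The remaining and genuinely new work is Case 4(a), where splitting forces the bookkeeping to be carried out by hand following the local intersection analysis of \cite{Rohsiepe}. As in the K3 proof, an edge of $\mathcal{T}$ joining two boundary points of $G$ yields a single edge of $\Gamma$, while an edge with an endpoint in the relative interior of $G$ yields two, one running into each of the two components produced by the interior point. The main obstacle I anticipate is extending this split bookkeeping from edges to the $2$- and $3$-simplices of $\mathcal{T}|_G$: I must check that triple intersections (curves) and quadruple intersections (points) are distributed consistently between the two sheets attached to each interior point, so that the pieces assemble into a well-defined complex --- the precise higher-dimensional analogue of verifying, in the K3 case, that doubling the interior of the two-disk along its boundary circle closes up to a sphere. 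Every other step is a direct dimensional translation of the argument already given for K3 surfaces.
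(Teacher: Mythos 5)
Your proposal is correct and takes essentially the same approach as the paper, which states this proposition without a separate written proof, presenting it as the direct analogue of Proposition~\ref{P:shortTopK3Degenerations}: your argument is exactly that analogy carried out, with Theorem~\ref{T:degeneration} supplying semistability, polar duality localizing splitting to the three-face summit case, the bookkeeping of \cite{Rohsiepe} for split divisors, and the dual complex computed directly in place of the (unavailable) Kulikov-type classification of Theorem~\ref{T:semistableK3}. The obstacle you flag in Case 4(a) is not actually required for the statement as written, since that case only asserts the vertices and edges of $\Gamma$, which your Rohsiepe-style count already settles.
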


We do not have a complete classification of semistable degenerations of Calabi-Yau threefolds analogous to Theorem~\ref{T:semistableK3}.  However, we may obtain a rough classification by analyzing the monodromy of the degeneration.  Let $\pi: X \to D$ be a degeneration, and let $X_t$ be a fixed smooth fiber.  The restriction of $\pi$ to $D-\{0\}$ induces an action of the fundamental group $\pi_1(D-\{0\}) \cong \mathbb{Z}$ on the cohomology groups $H^m(X_t)$.  The \emph{Picard-Lefshetz transformation} is the map $T:H^m(X_t) \to H^m(X_t)$ induced by the canonical generator of $\pi_1(D-\{0\})$.

\begin{theorem}[Monodromy Theorem]\cite{Landman}
If $\pi: X \to D$ is a semistable degeneration, then $T$ is unipotent, with index of unipotency at most $m$.  Thus, $(T-I)^{m+1}=0$, where $I$ is the identity.
\end{theorem}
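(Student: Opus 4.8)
The plan is to prove the two assertions in turn: first that $T$ is unipotent, and then that its logarithm $N = \log T$ satisfies $N^{m+1}=0$, from which $(T-I)^{m+1}=0$ follows. The reduction between the two statements is elementary, but it is worth recording because it justifies passing freely between $T-I$ and $N$. Once $T$ is known to be unipotent, $N$ is a well-defined nilpotent operator, and since $T - I = N\bigl(I + N/2 + N^2/6 + \cdots\bigr)$ with the parenthesized factor invertible (it is $I$ plus a nilpotent), the composites $(T-I)^r$ and $N^r$ differ by an invertible operator. Hence $(T-I)^r = 0$ if and only if $N^r = 0$, and it suffices to bound the nilpotency degree of $N$.

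To prove unipotency I would work with the complex of \emph{nearby cycles} $R\psi_\pi \mathbb{Q}$ on the special fiber $X_0$, which computes the cohomology of the nearby fiber via $H^m(X_t) \cong \mathbb{H}^m(X_0, R\psi_\pi \mathbb{Q})$ and carries the monodromy action of $T$. Because the degeneration is semistable, $X_0$ is a reduced normal crossings divisor and $\pi$ is locally of the form $(z_1, \dots, z_p, w_1, \dots) \mapsto z_1 \cdots z_p$ near a point where exactly $p$ components meet. On such a local model the nearby fiber is homotopy equivalent to a real torus $(S^1)^{p-1}$, and a direct computation shows that the local monodromy acts unipotently with an explicit nilpotent logarithm. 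Inspecting the stalks of $R\psi_\pi \mathbb{Q}$ in this way, and assembling them, shows that $T$ is unipotent globally.

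For the sharp bound I would invoke the monodromy weight filtration. The machinery of Steenbrink, or analytically Schmid's nilpotent orbit theorem applied to the variation of Hodge structure on $D - \{0\}$, endows $H^m(X_t)$ with a limiting mixed Hodge structure whose weight filtration $W_\bullet$ is the monodromy weight filtration centered at $m$. The essential structural facts are that the graded pieces $\mathrm{Gr}^W_j H^m(X_t)$ vanish outside the range $0 \le j \le 2m$, and that $N$ is a morphism of type $(-1,-1)$, so $N\bigl(\mathrm{Gr}^W_j\bigr) \subseteq \mathrm{Gr}^W_{j-2}$. Consequently $N^{m+1}$ carries each $\mathrm{Gr}^W_j$ into $\mathrm{Gr}^W_{j-2(m+1)}$, which is zero since $j - 2(m+1) < 0$; as $N$ respects the filtration and is nilpotent, $N^{m+1}=0$ on all of $H^m(X_t)$, and therefore $(T-I)^{m+1}=0$.

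The main obstacle is the global step: passing from the purely local unipotent monodromy computation to the sharp global bound on the nilpotency degree. This is exactly where the weight filtration must be constructed and shown to be strictly compatible with $N$, and it is the technical heart of the Clemens--Schmid and Steenbrink theory. For a semistable family one can make this concrete through the Steenbrink spectral sequence, whose $E_1$-page is built from the cohomology of the multiple intersections $D_I = \bigcap_{i \in I} D_i$ of the components of $X_0$; the operator $N$ shifts this page by a fixed bidegree, and the finite width of the spectral sequence forces $N^{m+1}=0$. Verifying the degeneration of this spectral sequence and pinning down the exact bidegree of $N$ is the part that requires genuine work, but it is precisely the input that yields the bound $m$ rather than mere abstract unipotency.
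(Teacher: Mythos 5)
The paper does not prove this statement at all: it is imported verbatim as Landman's Monodromy Theorem, cited to \cite{Landman}, and used as a black box to feed into the Clemens--Schmid machinery (Lemma~\ref{L:dualGraphNilpotency}). So there is no in-paper argument to compare against; what you have written is a sketch of the standard modern proof, and it is essentially sound. Your reduction from $(T-I)^{m+1}=0$ to $N^{m+1}=0$ is complete and correct. The unipotency step via nearby cycles and the local model $z_1\cdots z_p$ is the standard route for semistable families (this is where semistability is genuinely used --- without reduced normal crossings one only gets quasi-unipotency), and the sharp bound via the limiting mixed Hodge structure is exactly how Steenbrink/Schmid obtain it; note that Landman's original argument was a more direct geometric one, so your route differs from the cited source as well as from the paper. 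One point of care: your phrasing slips between the graded-level statement $N(\mathrm{Gr}^W_j) \subseteq \mathrm{Gr}^W_{j-2}$ and the filtration-level statement $N(W_j) \subseteq W_{j-2}$. The graded-level statement alone does not kill $N^{m+1}$ (after all, $N$ itself induces the zero map $\mathrm{Gr}^W_j \to \mathrm{Gr}^W_j$ while being nonzero); what you need, and what being a morphism of mixed Hodge structures of type $(-1,-1)$ actually gives, is the filtration-level containment, from which $N^{m+1}(H^m) = N^{m+1}(W_{2m}) \subseteq W_{-2} = 0$ follows immediately. With that wording fixed, your outline is a faithful account of the proof the paper implicitly relies on, with the acknowledged heavy lifting (existence of the limiting MHS with weights in $[0,2m]$, or equivalently the Steenbrink spectral sequence) carried by the references.
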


We may define a nilpotent operator $N$ by the finite sum
\[N = \log T = (T-I) - \frac{1}{2} (T-I)^2 + \frac{1}{3} (T-I)^2 - \dots.\]
The index of unipotency of $T$ is the same as the index of nilpotency of $N$.  One may use $N$ to define an exact sequence known as the Clemens-Schmid exact sequence; an expository treatment may be found in \cite{MorrisonCS}.

\begin{lemma}\cite{MorrisonCS}\label{L:dualGraphNilpotency}
Let $\pi: X \to D$ be a semistable degeneration, and let $\Gamma$ be the dual graph of the singular fiber.  Then $N^{m+1}: H^m(X_t) \to H^m(X_t)$ is always 0, and $N^{m}: H^m(X_t) \to H^m(X_t)$ is 0 if and only if $H^m(|\Gamma|)=0$.
\end{lemma}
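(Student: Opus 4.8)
The plan is to treat the two assertions separately. The bound $N^{m+1}=0$ follows at once from the Monodromy Theorem: since $T$ has index of unipotency at most $m$, we have $(T-I)^{m+1}=0$, and because $N=\log T$ is a power series in $(T-I)$ with leading term $(T-I)$ and no constant term, every monomial occurring in $N^{m+1}$ has degree at least $m+1$ in $(T-I)$. Hence $N^{m+1}=0$, and moreover $N$ and $T-I$ share the same index of nilpotency. All of the real content therefore lies in the second assertion, which I would establish using the limiting mixed Hodge structure carried by the semistable degeneration.

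First I would invoke the \emph{monodromy weight filtration} $W_\bullet$ that $N$ determines on $H^m(X_t)$. For a semistable degeneration this filtration is centered at $m$, runs from weight $0$ to weight $2m$, and has the defining property that $N$ lowers weight by $2$ while inducing isomorphisms $N^{\,j}\colon \mathrm{Gr}^W_{m+j}\xrightarrow{\sim}\mathrm{Gr}^W_{m-j}$ for every $j\ge 0$. From this structure the behavior of $N^m$ is immediate: it annihilates $W_{2m-1}$, since $N^m$ lowers weight by $2m$ and $W_{-1}=0$, so it factors through the top graded piece $\mathrm{Gr}^W_{2m}H^m(X_t)$, on which it induces the isomorphism $N^m\colon \mathrm{Gr}^W_{2m}\xrightarrow{\sim}\mathrm{Gr}^W_0$. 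Consequently $N^m=0$ on $H^m(X_t)$ if and only if $\mathrm{Gr}^W_{2m}H^m(X_t)=0$.

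The remaining, and technically central, step is to identify the top graded piece $\mathrm{Gr}^W_{2m}H^m(X_t)$ with $H^m(|\Gamma|)$. Here I would use the weight spectral sequence computing the limiting cohomology, whose $E_1$ terms are assembled from the cohomology of the multiple intersections of the components of the singular fiber. The weight-$2m$ contribution to $H^m$ comes solely from the ``combinatorial'' row built out of the groups $H^0$ of the successive multiple intersections; the differentials along this row are the alternating restriction maps, so the row is precisely the simplicial cochain complex of the dual graph $\Gamma$. Passing to cohomology of this complex yields $\mathrm{Gr}^W_{2m}H^m(X_t)\cong H^m(|\Gamma|)$, and combining this with the previous paragraph gives $N^m=0$ if and only if $H^m(|\Gamma|)=0$.

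I expect the main obstacle to be exactly this last identification: verifying that the top weight graded piece is computed purely by the dual-graph cochain complex requires genuine control of the weight spectral sequence---equivalently, of the Clemens--Schmid exact sequence---together with the fact that the positive-degree cohomology of the multiple intersections contributes only to strictly lower weights. Once the monodromy weight filtration and its Lefschetz-type isomorphisms $N^{\,j}$ are in hand, the reduction to this combinatorial statement is purely formal.
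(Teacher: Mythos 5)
The paper never proves this lemma: it is quoted directly from the expository reference \cite{MorrisonCS}, so there is no internal proof to compare against. Your argument is, in outline, the standard one from that literature. The first half is exactly right: $N^{m+1}=0$ follows from the Monodromy Theorem because $N=\log T$ is $(T-I)$ times a unit power series in $(T-I)$, so $N$ and $T-I$ have the same index of nilpotency. For the second half, Morrison's own route differs slightly from yours in the middle step: he identifies $\mathrm{Gr}^W_0 H^m(X_0)\cong H^m(|\Gamma|)$ via the Mayer--Vietoris spectral sequence of the normal crossing fiber and transports this to the limit cohomology through the Clemens--Schmid exact sequence, whereas you work directly with the Steenbrink weight spectral sequence on $H^m(X_t)$; these are equivalent in content, and both then rest on the isomorphisms $N^{\,j}\colon \mathrm{Gr}^W_{m+j}\xrightarrow{\sim}\mathrm{Gr}^W_{m-j}$ coming from the fact that the limit weight filtration is the monodromy weight filtration of $N$ centered at $m$.

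There is, however, one genuine (though fixable) error in your identification step: the row of the weight spectral sequence built from the groups $H^0$ of the successive multiple intersections, with alternating \emph{restriction} maps as differentials, is indeed the simplicial cochain complex of $\Gamma$, but it computes the \emph{lowest} weight piece $\mathrm{Gr}^W_0 H^m(X_t)\cong H^m(|\Gamma|)$, not the top piece $\mathrm{Gr}^W_{2m}$ as you assert. The weight-$2m$ term in degree $m$ is also built from $H^0$ of the $(m+1)$-fold intersections, but the differentials leaving it include \emph{Gysin} maps, and what that end of the spectral sequence computes is (up to Tate twist) the simplicial homology $H_m(|\Gamma|)$ --- of the same dimension over $\mathbb{Q}$, but not the complex you describe. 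The repair is immediate and uses only tools you already invoked: $N^m=0$ iff $\mathrm{Gr}^W_{2m}H^m(X_t)=0$ (your factorization argument), and by your own isomorphism $N^m\colon \mathrm{Gr}^W_{2m}\xrightarrow{\sim}\mathrm{Gr}^W_0$ this holds iff $\mathrm{Gr}^W_0 H^m(X_t)=0$, i.e.\ iff $H^m(|\Gamma|)=0$. With that weight-indexing correction, your proof is complete, modulo the standard deep inputs (Schmid/Steenbrink's description of the limit weight filtration and degeneration of the weight spectral sequence at $E_2$), which it is reasonable to cite rather than reprove --- just as the paper itself cites the whole statement.
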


By combining Lemma~\ref{L:dualGraphNilpotency} with the classification in Proposition~\ref{P:shortTopCY3Degenerations}, we may characterize semistable degenerations obtained from short tops that yield the maximum index of nilpotency:

\begin{corollary}
Let $\Diamond$ be a five-dimensional short top that determines a semistable degeneration of Calabi-Yau threefolds.  Then $N^3: H^*(X_t) \to H^*(X_t)$ is nontrivial if and only if the summit of $\Diamond$ is a three-face or a four-dimensional lattice polytope.
\end{corollary}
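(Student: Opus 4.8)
The plan is to reduce the statement to a single computation of $H^3(|\Gamma|)$, combining the Monodromy Theorem, Lemma~\ref{L:dualGraphNilpotency}, and the case analysis of Proposition~\ref{P:shortTopCY3Degenerations}. First I would argue that $N^3$ can act nontrivially only on the middle cohomology $H^3(X_t)$. By the Monodromy Theorem, $N^{m+1}=0$ on $H^m(X_t)$, so $N^3=0$ on $H^0$, $H^1$, and $H^2$. For the groups above the middle dimension I would invoke Poincar\'e duality: since the geometric monodromy is an orientation-preserving homeomorphism, $T$ fixes the orientation class in $H^6(X_t)\cong\mathbb{C}$ and hence preserves the cup-product pairing $H^m(X_t)\times H^{6-m}(X_t)\to H^6(X_t)$, so the monodromy on $H^{6-m}$ is the inverse adjoint of the monodromy on $H^m$. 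Taking logarithms, $N$ on $H^{6-m}$ is, up to sign, the adjoint of $N$ on $H^m$, and an operator and its adjoint have the same index of nilpotency. Thus for $m\geq 4$ the index on $H^m$ equals the index on $H^{6-m}$ with $6-m\leq 2$, forcing $N^3=0$ there as well. Therefore $N^3$ is nontrivial on $H^*(X_t)$ if and only if it is nontrivial on $H^3(X_t)$.

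Next I would apply Lemma~\ref{L:dualGraphNilpotency} with $m=3$: the map $N^3: H^3(X_t)\to H^3(X_t)$ is nonzero if and only if $H^3(|\Gamma|)\neq 0$, where $\Gamma$ is the dual complex of the singular fiber. The problem then reduces to deciding, in each case of Proposition~\ref{P:shortTopCY3Degenerations}, whether $H^3(|\Gamma|)$ vanishes.

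I would then work through the cases. In Case 1 the topological support of $\Gamma$ is a point, in Case 2 it is an interval (the chain), and in Case 3 Proposition~\ref{P:shortTopCY3Degenerations} identifies it as a disk; in all three $H^3=0$. In Case 4(b) the proposition identifies $|\Gamma|$ with the triangulated boundary $\partial P$ of the four-dimensional summit polytope, which is homeomorphic to $S^3$, so $H^3(|\Gamma|)\cong\mathbb{C}\neq 0$. The remaining case, 4(a), is where divisor splitting occurs, and I expect it to be the main obstacle, since Proposition~\ref{P:shortTopCY3Degenerations} records only the vertex and edge data of $\Gamma$ in this case and not its topological support. Here I would identify $|\Gamma|$ with the double of the three-dimensional summit $G$ along its boundary: each lattice point of $\partial G$ contributes a single component (lying on the gluing locus $\partial G\cong S^2$), each interior lattice point contributes two components (one on each copy of $G$), and the edge-doubling rule recorded in the proposition matches exactly the identification of simplices under doubling. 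Since $G$ is a convex three-polytope, hence a three-ball, its double along $\partial G$ is $S^3$, so again $H^3(|\Gamma|)\cong\mathbb{C}\neq 0$. This mirrors the K3 situation, where the double of the two-dimensional summit in Case 3(a) of Proposition~\ref{P:shortTopK3Degenerations} is the sphere $S^2$.

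Combining these computations, $H^3(|\Gamma|)\neq 0$ precisely in Cases 4(a) and 4(b), that is, exactly when the summit is a three-face or a four-dimensional lattice polytope. By the reduction to middle cohomology together with Lemma~\ref{L:dualGraphNilpotency}, this is exactly the condition under which $N^3$ is nontrivial, which is the claim. The only genuinely nonformal step is the identification of $|\Gamma|$ with $S^3$ in Case 4(a); the rest is bookkeeping with the Monodromy Theorem and the case list.
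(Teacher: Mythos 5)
Your proof is correct and follows the same route the paper intends: the paper states this corollary without proof, as a direct combination of Lemma~\ref{L:dualGraphNilpotency} (with $m=3$) and the case list of Proposition~\ref{P:shortTopCY3Degenerations}, which is exactly your skeleton. The two details you add --- the Poincar\'e duality reduction showing $N^3$ can only act nontrivially on $H^3(X_t)$, and the identification of $|\Gamma|$ in Case 4(a) with the double of the three-ball $G$ along $\partial G$, hence with $S^3$ --- are precisely the points the paper leaves implicit (Proposition~\ref{P:shortTopCY3Degenerations} records only vertex and edge data in that case), and you handle both correctly.
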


\textcolor{black}{One is naturally led to ask whether the types of degenerations described in Proposition~\ref{P:shortTopCY3Degenerations} are the only possible semistable degenerations of Calabi-Yau threefolds:}

\textcolor{black}{
\begin{question}If $\pi: X \to D$ is a semistable degeneration of Calabi-Yau threefolds, is $X_0$ always either a smooth Calabi-Yau threefold, a chain of $\ell$ components, or described by a dual graph that has a disk or the three-sphere as topological support?
\end{question}
}

\bibliographystyle{alpha}
\bibliography{topsbibarxiv}


\end{document}